\newif\ifmicrotype
  \def\tempaa{\thesection.\arabic{equation}}
     \def\theequation{\thesection\hbox{--}\arabic{equation}}
\DeclareFontFamily{OT1}{ptm}{}
\DeclareFontShape{OT1}{ptm}{m}{n} { <-> ptmr}{}
\DeclareFontShape{OT1}{ptm}{m}{it}{ <-> ptmri}{}
\DeclareFontShape{OT1}{ptm}{m}{sl}{ <->ptmro}{}
\DeclareFontShape{OT1}{ptm}{m}{sc}{ <-> ptmrc}{}
\DeclareFontShape{OT1}{ptm}{b}{n} { <-> ptmb}{}
\DeclareFontShape{OT1}{ptm}{b}{it}{ <-> ptmbi}{}     
\DeclareFontShape{OT1}{ptm}{bx}{n} {<->ssub * ptm/b/n}{}
\DeclareFontShape{OT1}{ptm}{bx}{it}{<->ssub * ptm/b/it}{}
\DeclareSymbolFont{bold}{OT1}{ptm}{b}{n}
\DeclareMathAlphabet{\mathbf}{OT1}{ptm}{b}{n}  
\DeclareMathAlphabet{\mathrm}{OT1}{ptm}{m}{n}
\DeclareFontFamily{OT1}{psy}{}      
\DeclareFontShape{OT1}{psy}{m}{n}{ <-> s * [0.9] psyr}{}
\DeclareFontFamily{OMS}{ptm}{}     
\DeclareFontShape{OMS}{ptm}{m}{n}{ <8> <9> <10> gen * cmsy }{}
\DeclareFontFamily{OMS}{cmtt}{}     
\DeclareFontShape{OMS}{cmtt}{m}{n}{ <8> <9> <10> gen * cmsy }{}
\DeclareSymbolFont{emsy}{OT1}{ptm}{m}{it}
\DeclareSymbolFont{emsr}{OT1}{ptm}{m}{n}
\DeclareSymbolFont{emcmr}{OT1}{cmr}{m}{n}   
\DeclareSymbolFont{emsymb}{OT1}{psy}{m}{n}  
\DeclareMathSymbol a{\mathalpha}{emsy}{"61}
\DeclareMathSymbol b{\mathalpha}{emsy}{"62}
\DeclareMathSymbol c{\mathalpha}{emsy}{"63}
\DeclareMathSymbol d{\mathalpha}{emsy}{"64}
\DeclareMathSymbol e{\mathalpha}{emsy}{"65}
\DeclareMathSymbol f{\mathalpha}{emsy}{"66}
\DeclareMathSymbol g{\mathalpha}{emsy}{"67}
\DeclareMathSymbol h{\mathalpha}{emsy}{"68}
\DeclareMathSymbol i{\mathalpha}{emsy}{"69}
\DeclareMathSymbol j{\mathalpha}{emsy}{"6A}
\DeclareMathSymbol k{\mathalpha}{emsy}{"6B}
\DeclareMathSymbol l{\mathalpha}{emsy}{"6C}
\DeclareMathSymbol m{\mathalpha}{emsy}{"6D}
\DeclareMathSymbol n{\mathalpha}{emsy}{"6E}
\DeclareMathSymbol o{\mathalpha}{emsy}{"6F}
\DeclareMathSymbol p{\mathalpha}{emsy}{"70}
\DeclareMathSymbol q{\mathalpha}{emsy}{"71}
\DeclareMathSymbol r{\mathalpha}{emsy}{"72}
\DeclareMathSymbol s{\mathalpha}{emsy}{"73}
\DeclareMathSymbol t{\mathalpha}{emsy}{"74}
\DeclareMathSymbol u{\mathalpha}{emsy}{"75}
\DeclareMathSymbol v{\mathalpha}{emsy}{"76}
\DeclareMathSymbol w{\mathalpha}{emsy}{"77}
\DeclareMathSymbol x{\mathalpha}{emsy}{"78}
\DeclareMathSymbol y{\mathalpha}{emsy}{"79}
\DeclareMathSymbol z{\mathalpha}{emsy}{"7A}
\DeclareMathSymbol A{\mathalpha}{emsy}{"41}
\DeclareMathSymbol B{\mathalpha}{emsy}{"42}
\DeclareMathSymbol C{\mathalpha}{emsy}{"43}
\DeclareMathSymbol D{\mathalpha}{emsy}{"44}
\DeclareMathSymbol E{\mathalpha}{emsy}{"45}
\DeclareMathSymbol F{\mathalpha}{emsy}{"46}
\DeclareMathSymbol G{\mathalpha}{emsy}{"47}
\DeclareMathSymbol H{\mathalpha}{emsy}{"48}
\DeclareMathSymbol I{\mathalpha}{emsy}{"49}
\DeclareMathSymbol J{\mathalpha}{emsy}{"4A}
\DeclareMathSymbol K{\mathalpha}{emsy}{"4B}
\DeclareMathSymbol L{\mathalpha}{emsy}{"4C}
\DeclareMathSymbol M{\mathalpha}{emsy}{"4D}
\DeclareMathSymbol N{\mathalpha}{emsy}{"4E}
\DeclareMathSymbol O{\mathalpha}{emsy}{"4F}
\DeclareMathSymbol P{\mathalpha}{emsy}{"50}
\DeclareMathSymbol Q{\mathalpha}{emsy}{"51}
\DeclareMathSymbol R{\mathalpha}{emsy}{"52}
\DeclareMathSymbol S{\mathalpha}{emsy}{"53}
\DeclareMathSymbol T{\mathalpha}{emsy}{"54}
\DeclareMathSymbol U{\mathalpha}{emsy}{"55}
\DeclareMathSymbol V{\mathalpha}{emsy}{"56}
\DeclareMathSymbol W{\mathalpha}{emsy}{"57}
\DeclareMathSymbol X{\mathalpha}{emsy}{"58}
\DeclareMathSymbol Y{\mathalpha}{emsy}{"59}
\DeclareMathSymbol Z{\mathalpha}{emsy}{"5A}
\DeclareMathSymbol{\bullet}{\mathalpha}{emsymb}{"B7}
\DeclareMathSymbol{\regis}{\mathalpha}{emsymb}{"D2}
\def\Bullet{\leavevmode\unkern{$\m@th\bullet$}\kern.32em\ignorespaces}
\def\Regis{\leavevmode\raise.5ex\hbox{$\m@th\regis$}}
\DeclareMathSymbol +{\mathbin}{emcmr}{`+}
\DeclareMathSymbol ={\mathrel}{emcmr}{`=}  
\DeclareMathSymbol{\Gamma}{\mathalpha}{emcmr}{"00}
\DeclareMathSymbol{\Delta}{\mathalpha}{emcmr}{"01}
\DeclareMathSymbol{\Theta}{\mathalpha}{emcmr}{"02}
\DeclareMathSymbol{\Lambda}{\mathalpha}{emcmr}{"03}
\DeclareMathSymbol{\Xi}{\mathalpha}{emcmr}{"04}
\DeclareMathSymbol{\Pi}{\mathalpha}{emcmr}{"05}
\DeclareMathSymbol{\Sigma}{\mathalpha}{emcmr}{"06}
\DeclareMathSymbol{\Upsilon}{\mathalpha}{emcmr}{"07}
\DeclareMathSymbol{\Phi}{\mathalpha}{emcmr}{"08}
\DeclareMathSymbol{\Psi}{\mathalpha}{emcmr}{"09}
\DeclareMathSymbol{\Omega}{\mathalpha}{emcmr}{"0A}
\def\`#1{{\accent"12 #1}}            
\chardef\J="11                  
\chardef\AA="C8                      
\chardef\gbp="A3                    
\chardef\TIL="81                     
\chardef\endash="B1                
\chardef\emdash="D0                
\chardef\pourmille="BD              
\chardef\aoben="E3                   
\chardef\ooben="EB                  
\def\S{\leavevmode\unkern{\char"A7}\kern.1em\ignorespaces} 
\DeclareMathAccent{\dot}{\mathalpha}{operators}{"C7} 
\def\og{{\char"AB}\kern.15em}
\def\fg{\relax\ifhmode\unskip\kern.15em\fi{\char"BB}}
\def\cedpol#1{\setbox0=\hbox{#1}\ifdim\ht0=1ex \accent"CE #1%
  \else{\ooalign{\hidewidth\char"CE\hidewidth\crcr\unhbox0}}\fi}
\newskip\stdskip                      %
\newcommand{\stdspace}{\hskip 0.75em plus 0.15em \ignorespaces}
\newtheoremstyle{plain}{13.2pt plus6.6pt minus6.6pt}{6.6pt plus3.3pt minus3.3pt}%
{\sl}{}{\bf}{}{0.75em}{\thmname{#1}\thmnumber{ #2}\thmnote{\rm\stdspace(#3)}}
\newtheoremstyle{definition}{13.2pt plus6.6pt minus6.6pt}{6.6pt plus3.3pt minus3.3pt}%
{\rm}{}{\bf}{}{0.75em}{\thmname{#1}\thmnumber{ #2}\thmnote{\rm\stdspace(#3)}}
\newtheoremstyle{remark}{13.2pt plus6.6pt minus6.6pt}{6.6pt plus3.3pt minus3.3pt}%
{\rm}{}{\bf}{}{0.75em}{\thmname{#1}\thmnumber{ #2}\thmnote{\rm\stdspace(#3)}}
\theoremstyle{plain}
\newcommand{\co}{\mskip0.5mu\colon\thinspace}
\newcommand{\tpitchfork}{%
  \vbox{
    \baselineskip\z@skip
    \lineskip-.52ex
    \lineskiplimit\maxdimen
    \m@th
    \ialign{##\crcr\hidewidth\smash{$-$}\hidewidth\crcr$\pitchfork$\crcr}
  }%
}
\newtheorem*{namedthm}{\namedthmname}
\newcounter{namedthm}
\newtheorem{theorem}{Theorem}[section]
\newtheorem{lemma}[theorem]{Lemma}
\newtheorem{remark}[theorem]{Remark}
\newtheorem*{def*}{Definition}
\newtheorem*{rem*}{Remark}
\newcommand*{\sectionbookmark}[1][]{%
  \bookmark[%
    level=section,%
    dest=\@currentHref,%
    #1%
  ]%
}
\DeclareFontFamily{U}{mathb}{\hyphenchar\font45}
\DeclareFontShape{U}{mathb}{m}{n}{%
  <-6> mathb5
  <6-7> mathb6
  <7-8> mathb7
  <8-9> mathb8
  <9-10> mathb9
  <10-12> mathb10
  <12-> mathb12
  } {}%
\DeclareSymbolFont{mathb}{U}{mathb}{m}{n}
\DeclareMathSymbol{\abxpitchfork}{\mathord}{mathb}{"26}
\author{Sumanta Das and Siddhartha Gadgil}
\date{}
\title{Surfaces of infinite-type are non-Hopfian}
\NewDocumentEnvironment{theo}{o}
 {\IfNoValueTF{#1}
   {\theoaux\addcontentsline{toc}{subsection}{Theorem \protect\numberline{\thesubsection}}}
   {\theoaux[#1]\addcontentsline{toc}{subsection}{Theorem \protect\numberline{\thesubsection} (#1)}}%
   \ignorespaces}
 {\endtheoaux}
\definecolor{green}{rgb}{0.0, 0.5, 0.0}
\begin{document}

\maketitle

\begin{abstract}\mbox{}\\We show that finite-type surfaces are characterized by a topological analog of the Hopf property.
Namely, an oriented surface $\Sigma$ is of finite-type if and only if every proper map $f\co\Sigma\to \Sigma$ of degree one is homotopic to a homeomorphism. \end{abstract}

\section{Introduction}

All surfaces will be assumed to be connected and orientable throughout this note. We will say a surface is of \emph{finite-type} if its fundamental group is finitely generated; otherwise, we will say it is of \emph{infinite-type}.

Recall that a group $G$ is said to be \emph{Hopfian}  if every surjective homomorphism $\varphi\co G\twoheadrightarrow G$ is an isomorphism. It is well known that a finitely generated free group is Hopfian, for instance, as a consequence of Grushko's theorem. On the other hand, a free group generated by an infinite set $S$ is not Hopfian as a surjective function $f\co S \to S$ that is not injective extends to a surjective homomorphism on the free group generated by $S$ which is not injective.

In this note, we show that there is an analogous characterization for orientable surfaces of \emph{finite-type}. The natural topological analog of a surjective group homomorphism is a proper map of  degree one, and that of an isomorphism is a homotopy equivalence.

One-half of this characterization is classical, namely that any proper map of degree one from a surface of finite-type to itself is a homotopy equivalence. For instance, a theorem of Olum (see~\cite[Corollary 3.4]{MR192475}) says that every proper map of degree one between two oriented manifolds of the same dimension is $\pi_1$-surjective. Now, the fundamental group of any surface is  residually finite (see \cite{MR295352}). Also, any finitely generated residually finite group is Hopfian. Thus, every degree one self map of a finite-type surface is a weak homotopy equivalence, hence a homotopy equivalence by Whitehead's theorem.

Our main result is that infinite-type surfaces are not Hopfian.

\begin{theo}\label{main}\textup{Let $\Sigma$ be any infinite-type surface. Then there exists a proper map $f\co \Sigma\to \Sigma$ of degree one such that $\pi_1(f)\co \pi_1(\Sigma)\to \pi_1(\Sigma)$ is not injective. In particular, $f$ is not a homotopy equivalence.}
 \end{theo}
 
\section{Background}A \emph{surface} is a connected, orientable two-dimensional manifold without boundary, and a \emph{bordered surface} is a connected, orientable two-dimensional manifold with a non-empty boundary. A (possibly bordered) subsurface $\Sigma'$ of a surface $\Sigma$ is an embedded submanifold of codimension zero.

Let $\Sigma$ be a non-compact surface.
A \emph{boundary component} of $\Sigma$ is a nested sequence $P_1\supseteq P_2\supseteq \cdots$ of open, connected subsets of $\Sigma$ such that the followings hold:
\begin{itemize}
    \item the closure (in $\Sigma$) of each $P_n$ is non-compact,
    \item the boundary of each $P_n$ is compact, and 
    \item for any subset $A$ with compact closure (in $\Sigma$),  we have $P_n\cap A=\varnothing$ for all large $n$.
\end{itemize}

We say that two boundary components $P_1\supseteq P_2\supseteq \cdots$ and $P_1'\supseteq P_2'\supseteq \cdots$ of $\Sigma$ are \emph{equivalent} if for any positive integer $n$ there are positive integers $k_n,\ell_n$ such that $P_{k_n}\subseteq P'_n$ and $P'_{\ell_n}\subseteq P_n$. 
For a boundary component $\mathscr P=P_1\supseteq P_2\supseteq \cdots$, we  let $[\mathscr P]$ to denote the equivalence class of $\mathscr P$.

The \emph{space of ends} $\textup{Ends} (\Sigma)$ of $\Sigma$ is the topological space having equivalence classes of boundary components of $\Sigma$ as elements, i.e., as a set $\textup{Ends}(\Sigma)\coloneqq\big\{[\mathscr P]\big| \mathscr P\text{ is a boundary component }\big\};$ with the following topology: For any set $X$ with compact boundary, at first, define $$X^\dag\coloneqq \big\{[\mathscr P=P_1\supseteq P_2\supseteq\cdots]\big| X\supseteq P_n\supseteq P_{n+1}\supseteq\cdots\text{ for some large }n\big\}.$$

Now, take the set of all such $X^\dag$ as a basis for the topology of $\textup{Ends}(\Sigma)$. The topological space $\textup{Ends}(\Sigma)$ is compact, separable, totally disconnected, and metrizable, i.e., homeomorphic to a non-empty closed subset of the Cantor set. 

For a boundary component $[\mathscr P]$ with $\mathscr P=P_1\supseteq P_2\supseteq\cdots$, we say $[\mathscr P]$ is \emph{planar} if $P_n$ are homeomorphic to open subsets $\mathbb R^2$ for all large $n$. Define $\textup{Ends}_\text{np}(\Sigma)\coloneqq\big\{[\mathscr P]: [\mathscr P]\text{ is }\text{ not }\text{ planar}\big\}$. Thus, $\textup{Ends}_\text{np}(\Sigma)$ is a closed subset of $\textup{Ends}(\Sigma)$. Also, define the \emph{genus} of $\Sigma$ as $g(\Sigma)\coloneqq\sup g(\mathbf S)$,  where $\mathbf S$ is a compact bordered subsurface of $\Sigma$.

\begin{theorem}{\textup{\big(Kerékjártó's classification theorem \cite[Theorem 1]{MR143186}\big)}}
 \textup{Let $\Sigma_1,\Sigma_2$ be two non-compact surfaces. Then $\Sigma_1$ is homeomorphic to $\Sigma_2$ if and only if  $g(\Sigma_1)=g(\Sigma_2)$, and  there is a homeomorphism $\Phi\co \textup{Ends}(\Sigma_1)\to \textup{Ends}(\Sigma_2)$ with $\Phi\big(\textup{Ends}_\textup{np}(\Sigma_1)\big)=\textup{Ends}_\textup{np}(\Sigma_2)$.}\label{richard2}
\end{theorem}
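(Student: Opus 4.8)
The plan is to prove both implications, with the forward direction being immediate and the converse carrying essentially all of the weight. For necessity, any homeomorphism $h\co\Sigma_1\to\Sigma_2$ induces a homeomorphism of ends, since ends are defined purely in terms of the topology (nested sequences of open sets with compact frontier); it preserves $g$, since genus is a supremum of a homeomorphism-invariant quantity over compact bordered subsurfaces; and it carries planar end-neighbourhoods to planar end-neighbourhoods, hence $\textup{Ends}_{\textup{np}}(\Sigma_1)$ onto $\textup{Ends}_{\textup{np}}(\Sigma_2)$. Thus all three quantities are genuine invariants, and the transported homeomorphism serves as $\Phi$.

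For sufficiency the engine is the classical classification of compact bordered surfaces: a compact, connected, orientable bordered surface is determined up to homeomorphism by its genus and its number of boundary circles. I would first fix, for each $i\in\{1,2\}$, an exhaustion $\Sigma_i=\bigcup_n K_n^i$ by compact connected bordered subsurfaces with $K_n^i\subset\operatorname{int}K_{n+1}^i$ and with no complementary region of $K_n^i$ relatively compact. Such exhaustions exist for any non-compact surface, and the partitions of $\textup{Ends}(\Sigma_i)$ into clopen sets cut out by the components of $\Sigma_i\setminus K_n^i$ refine as $n$ grows and generate the topology on ends.

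The heart of the argument is to realise the abstract homeomorphism $\Phi$ by compatible finite data and to match genus simultaneously. Since the end spaces are compact, metrizable and totally disconnected, $\Phi$ sends clopen partitions to clopen partitions; interleaving the two exhaustions and passing to a common cofinal reindexing, I would arrange that for every $n$ the map $\Phi$ carries the stage-$n$ partition of $\textup{Ends}(\Sigma_1)$ bijectively onto the stage-$n$ partition of $\textup{Ends}(\Sigma_2)$, compatibly with refinement, and matching pieces meeting $\textup{Ends}_{\textup{np}}$ with pieces meeting $\textup{Ends}_{\textup{np}}$. In particular the number of complementary regions agrees at every stage. Using $g(\Sigma_1)=g(\Sigma_2)$ together with this matching of non-planar ends, I would further adjust the exhaustions so that within each matched pair of regions the genus carried by the $n$-th collar $\overline{K_{n+1}^i\setminus K_n^i}$ is equal on the two sides: finite genus can be pushed into a single collar, while infinite genus, which must accumulate exactly along $\textup{Ends}_{\textup{np}}$, is spread along the collars converging to those ends so that the two distributions coincide.

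Finally I would build the homeomorphism inductively, constructing $h_n\co K_n^1\to K_n^2$ with $h_{n+1}|_{K_n^1}=h_n$, the base case being the compact classification. For the inductive step the collar $\overline{K_{n+1}^1\setminus K_n^1}$ is a disjoint union of compact connected bordered surfaces, one per matched region, each with a prescribed inner boundary on which $h_n$ is already defined and a prescribed number of outer circles dictated by the stage-$(n+1)$ refinement; by the genus and boundary-count matching above, each piece is homeomorphic to its counterpart in $\Sigma_2$ by a homeomorphism extending $h_n$ (adjusting by a self-homeomorphism to fix the inner-boundary parametrization), and these assemble into $h_{n+1}$. Then $h=\bigcup_n h_n$ is a homeomorphism $\Sigma_1\to\Sigma_2$ respecting the exhaustions. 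I expect the main obstacle to be precisely the simultaneous bookkeeping of the previous paragraph — realising $\Phi$ on the inverse-limit end spaces by a cofinal pair of compatible finite partitions while distributing genus correctly, especially infinite genus accumulating at non-planar ends; the remaining steps are either classical or formal.
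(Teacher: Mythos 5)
The paper does not prove this statement; it is quoted as background directly from Richards' classification paper, so there is no internal proof to compare against. Your outline is essentially Richards' original argument --- invariance of genus, ends, and planarity of ends for necessity; and, for sufficiency, compatible exhaustions by compact bordered subsurfaces, realization of $\Phi$ by finite clopen partitions, genus bookkeeping along collars, and an inductive extension using the classification of compact bordered surfaces. The architecture is the right one, and you correctly identify where the work is concentrated.

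Two steps, however, would fail as literally described. First, ``interleaving the two exhaustions and passing to a common cofinal reindexing'' cannot make the stage-$n$ partitions correspond under $\Phi$: the exhaustions live on two different surfaces related only through their end spaces, and no reindexing of two independently chosen exhaustions will force the partition of $\textup{Ends}(\Sigma_2)$ induced by $K^2_n$ to equal the $\Phi$-image of the partition induced by $K^1_n$. What you need, and must prove, is that \emph{every} clopen partition of $\textup{Ends}(\Sigma_2)$ is induced by some compact bordered subsurface; the exhaustion of $\Sigma_2$ is then built adaptively, one stage at a time, to realize the transported partitions. Second, the number of outer boundary circles of a collar component is \emph{not} ``dictated by the stage-$(n{+}1)$ refinement'': a complementary region carrying a prescribed clopen set of ends may meet $\partial K^i_{n+1}$ in any number of circles. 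The standard repair is to normalize both exhaustions so that every complementary component of every $K^i_n$ has connected boundary (enlarge $K^i_n$ by regular neighbourhoods of arcs in the complement joining boundary circles of a common complementary component; this merges those circles at the cost of adding genus to the compact part, which the genus bookkeeping must absorb --- harmless, since the enlarged piece is still a subsurface of $\Sigma_i$). After that normalization each matched collar piece has one inner circle, outer circles in bijection with the stage-$(n{+}1)$ pieces refining it, and a matched finite genus, so the inductive extension goes through. With these two repairs, and the observation that infinite genus accumulates exactly at the ends in $\textup{Ends}_{\textup{np}}$ (so matched regions have infinite genus simultaneously), your plan is the standard proof.
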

Let $\Sigma$ be a non-compact surface, and let $\mathscr E_\text{np}(\Sigma)\subseteq \mathscr E(\Sigma)$ be two closed, totally-disconnected subsets of $\mathbb S^2$ such that the pair  $\text{Ends}_\text{np}(\Sigma)\subseteq \text{Ends}(\Sigma)$ is homeomorphic to the pair $\mathscr E_\text{np}(\Sigma)\subseteq \mathscr E(\Sigma)$. Consider a pairwise disjoint collection $\{D_i\subseteq \mathbb S^2\setminus \mathscr E(\Sigma): i\in \mathscr A\}$ of closed disks, where $|\mathscr A|= g(\Sigma)$, such that the following holds: For $p\in \mathbb S^2$, any open neighborhood (in $\mathbb S^2$) of $p$ contains infinitely many $D_i$ if and only if $p\in \mathscr E_\textup{np}(\Sigma)$. The proof of \cite[Theorem 2]{MR143186} describes constructing such a collection of disks.

Now, let $M\coloneqq (\mathbb S^2\setminus \mathscr E(\Sigma))\setminus\sqcup_{i\in \mathscr A}\text{int}(D_i)$ and $N\coloneqq \sqcup_{i\in \mathscr A} S_{1,1}$, where $S_{1,1}$ is the genus one compact bordered surface with one boundary component. Define a non-compact surface $\Sigma_\text{handle}$ as follows: $\Sigma_\text{handle}\coloneqq M\sqcup_{\partial M\equiv \partial N} N$. Then we have the following theorem.

\begin{theorem}{\textup{\big(Richards' representation theorem \cite[Theorems 2 and 3]{MR143186}\big)}}
\textup{The surface $\Sigma_\textup{handle}$ is homeomorphic to $\Sigma$.} \label{richard1}\end{theorem}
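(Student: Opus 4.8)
The plan is to deduce the homeomorphism $\Sigma_{\text{handle}}\cong \Sigma$ from Kerékjártó's classification theorem (Theorem~\ref{richard2}). By that theorem it suffices to establish two things: that $g(\Sigma_{\text{handle}})=g(\Sigma)$, and that there is a homeomorphism $\Phi\co \text{Ends}(\Sigma_{\text{handle}})\to \text{Ends}(\Sigma)$ with $\Phi\big(\text{Ends}_{\text{np}}(\Sigma_{\text{handle}})\big)=\text{Ends}_{\text{np}}(\Sigma)$. Since by construction the pair $\mathscr E_{\text{np}}(\Sigma)\subseteq \mathscr E(\Sigma)$ is homeomorphic to $\text{Ends}_{\text{np}}(\Sigma)\subseteq \text{Ends}(\Sigma)$, I would instead produce a homeomorphism of pairs $\big(\text{Ends}(\Sigma_{\text{handle}}),\text{Ends}_{\text{np}}(\Sigma_{\text{handle}})\big)\cong \big(\mathscr E(\Sigma),\mathscr E_{\text{np}}(\Sigma)\big)$ and then compose with the given homeomorphism.

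For the genus, I would observe that $M$ is an open subsurface of $\mathbb S^2$ and hence planar, so all the genus of $\Sigma_{\text{handle}}$ is contributed by the attached copies of $S_{1,1}$, one unit apiece. A compact bordered subsurface $\mathbf S\subseteq \Sigma_{\text{handle}}$ meets only finitely many of the handles and has genus equal to the number of handles it engulfs; since for each finite $k\le |\mathscr A|$ one can find such an $\mathbf S$ containing exactly $k$ complete handles, the supremum gives $g(\Sigma_{\text{handle}})=|\mathscr A|=g(\Sigma)$, reading $\infty=\infty$ when $g(\Sigma)$ is infinite.

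For the ends, the key observation is that passing from $\mathbb S^2\setminus \mathscr E(\Sigma)$ to $M$ only deletes the interiors of a family of disjoint disks, adding the compact boundary circles $\partial D_i$ but creating no new ends, and that forming $\Sigma_{\text{handle}}$ glues the compact pieces $S_{1,1}$ along these circles, a modification supported away from every end. Consequently the inclusion-induced maps give natural homeomorphisms $\text{Ends}(\Sigma_{\text{handle}})\cong \text{Ends}(M)\cong \text{Ends}(\mathbb S^2\setminus \mathscr E(\Sigma))\cong \mathscr E(\Sigma)$, the last identification sending an end to the point of $\mathscr E(\Sigma)$ it converges to. It then remains to match the non-planar ends: an end of $\Sigma_{\text{handle}}$ is non-planar precisely when every one of its neighborhoods carries infinite genus, i.e.\ contains infinitely many handles, and by the defining property of the family $\{D_i\}$ this occurs exactly when the corresponding point of $\mathscr E(\Sigma)$ lies in $\mathscr E_{\text{np}}(\Sigma)$. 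This yields the required homeomorphism of pairs, and Kerékjártó's theorem then gives $\Sigma_{\text{handle}}\cong \Sigma$.

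The main obstacle I anticipate is making the end-space identification fully rigorous in the presence of infinitely many disks accumulating on $\mathscr E_{\text{np}}(\Sigma)$: one must check that deleting these disk interiors near a point $p\in\mathscr E_{\text{np}}(\Sigma)$ neither disconnects small punctured neighborhoods of $p$, so that no single end is split into several, nor produces additional ends, and that the handle attachment is genuinely local, so that it increases the genus accumulating at $p$ without altering the underlying end. Verifying that the basis sets $X^\dag$ for the two end spaces correspond under the construction — so that $\Phi$ is a homeomorphism and not merely a bijection — is where the care is required; the genus count and the planar/non-planar dichotomy are then comparatively routine.
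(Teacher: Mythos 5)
Your proposal is correct and follows exactly the intended route: the paper states this result as a citation to Richards, who himself deduces it from the classification theorem (\Cref{richard2}) by checking that $\Sigma_{\textup{handle}}$ has the same genus and the same pair $\big(\textup{Ends},\textup{Ends}_{\textup{np}}\big)$ as $\Sigma$, which is precisely your argument. The technical points you flag (local finiteness of the disks away from $\mathscr E_{\textup{np}}(\Sigma)$, no splitting or creation of ends, and the identification $\textup{Ends}\big(\mathbb S^2\setminus\mathscr E(\Sigma)\big)\cong\mathscr E(\Sigma)$) are the right ones and are handled in the standard way.
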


\section{Proof of \texorpdfstring{\Cref{main}}{Theorem \ref{main}}}
Let $M$ and $N$ be two non-compact, oriented, connected, boundaryless smooth  $n$-manifolds. Then the singular cohomology groups with compact support $H^n_\textbf{c}(M;\mathbb Z)$ and $H^n_\textbf{c}(N;\mathbb Z)$ are infinite cyclic with preferred generators $[M]$ and $[N]$. If $f\colon M\to N$ is a proper map then the  degree of $f$ is the unique integer $\deg(f)$ defined as follows: $H^n_\textbf{c}(f)([N])=\deg(f)\cdot [M]$.  Note that $\deg$ is proper-homotopy invariant and multiplicative. See \cite[Section 1]{MR192475} for more details.

We will use the following well-known characterization of degree.

\begin{lemma}{\textup{\cite[Lemma 2.1b.]{MR192475}}}
\textup{Let $f\co M\to N$ be a proper map between two non-compact, oriented, connected, boundaryless smooth  $n$-manifolds. Let $D$ be a smoothly embedded closed disk in $N$ and suppose $f^{-1}(D)$ is a smoothly embedded closed disk in $M$ such that $f$ maps $f^{-1}(D)$ homeomorphically onto $D$. Then $\deg (f)=+1$ or $-1$ according as $f\vert f^{-1}(D)\to D$ is orientation-preserving or orientation-reversing.}
\label{degreeonemapchecking}\end{lemma}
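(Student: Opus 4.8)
The plan is to show that the preferred generator $[N]$ is already detected by cohomology supported near $D$, and that $f$ pulls this local class back to $\varepsilon\,[M]$, where the sign $\varepsilon=\pm1$ records whether $g\coloneqq f|_{f^{-1}(D)}\co f^{-1}(D)\to D$ preserves or reverses orientation. Write $E\coloneqq f^{-1}(D)$; since $f$ is proper and $D$ is compact, $E$ is compact, and by hypothesis $g$ is a homeomorphism of closed $n$-disks carrying $(E,\partial E)=(f^{-1}(D),f^{-1}(\partial D))$ onto $(D,\partial D)$.

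First I would recall the colimit description $H^n_{\mathbf c}(N)=\varinjlim_{K}H^n(N,N\setminus K)$, taken over compact $K\subseteq N$ with restriction maps, and write $\lambda_K\co H^n(N,N\setminus K)\to H^n_{\mathbf c}(N)$ for the canonical map into the colimit. Excision identifies $H^n(N,N\setminus D)\cong H^n(D,\partial D)\cong\mathbb Z$; let $\mu_D$ be the generator fixed by the orientation of $N$. The characterizing property of the fundamental class is that $\lambda_D(\mu_D)=[N]$; as $[N]$ generates $H^n_{\mathbf c}(N)\cong\mathbb Z$, the map $\lambda_D$ is a surjection of infinite cyclic groups, hence an isomorphism. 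The same applies to $E\subseteq M$, producing $\mu_E$ with $\lambda_E(\mu_E)=[M]$.

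Next, because $f$ is proper with $f^{-1}(N\setminus D)=M\setminus E$, it is a map of pairs $(M,M\setminus E)\to(N,N\setminus D)$, and by the very construction of $H^n_{\mathbf c}(f)$ as the colimit of such pullbacks the square
\[
\begin{tikzcd}
H^n(N,N\setminus D) \arrow[r, "f^*"] \arrow[d, "\lambda_D"'] & H^n(M,M\setminus E) \arrow[d, "\lambda_E"] \\
H^n_{\mathbf c}(N) \arrow[r, "H^n_{\mathbf c}(f)"'] & H^n_{\mathbf c}(M)
\end{tikzcd}
\]
commutes. Under the excision isomorphisms $H^n(N,N\setminus D)\cong H^n(D,\partial D)$ and $H^n(M,M\setminus E)\cong H^n(E,\partial E)$, the top arrow becomes $g^*\co H^n(D,\partial D)\to H^n(E,\partial E)$, which is an isomorphism with $g^*\mu_D=\varepsilon\,\mu_E$. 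Chasing $\mu_D$ around the square then yields $H^n_{\mathbf c}(f)([N])=\lambda_E(g^*\mu_D)=\varepsilon\,\lambda_E(\mu_E)=\varepsilon\,[M]$, so $\deg(f)=\varepsilon=\pm1$ exactly as the statement requires.

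The step I expect to demand the most care is the compatibility between the local and compactly-supported pictures: verifying $\lambda_D(\mu_D)=[N]$ with the correct sign convention, and checking that $f^*$ corresponds to $g^*$ after excision — this last point rests on $f^{-1}(D)=E$ and $f^{-1}(\partial D)=\partial E$, so that $f$ respects the neighbourhoods being excised. Everything else is orientation bookkeeping. An equivalent and perhaps slicker route works with open supports: set $U\coloneqq\mathrm{int}(D)$ and $V\coloneqq\mathrm{int}(E)=f^{-1}(U)$, use that extension by zero $j_!\co H^n_{\mathbf c}(U)\to H^n_{\mathbf c}(N)$ is an isomorphism for an open disk, and invoke the base-change identity $H^n_{\mathbf c}(f)\circ j_!=i_!\circ(f|_V)^*$ for the cartesian square $V=f^{-1}(U)$; the ensuing diagram chase is identical.
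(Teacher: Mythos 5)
The paper offers no proof of this lemma: it is imported verbatim from Epstein's paper (Lemma 2.1b of \cite{MR192475}), so there is no in-paper argument to compare against. Your proof is correct and is the standard local-degree computation in compactly supported cohomology, which is also the spirit of Epstein's original treatment. The one step you flag as delicate --- that $f^*$ on $H^n(N,N\setminus D)\to H^n(M,M\setminus E)$ becomes $g^*$ under the excision identifications --- does go through cleanly here: since $f^{-1}(D)=E$, $f^{-1}(\mathrm{int}\,D)=\mathrm{int}\,E$ and $f^{-1}(\partial D)=\partial E$ (the last because a homeomorphism of compact manifolds with boundary preserves the boundary), every arrow in the relevant ladder of pairs, namely $(E,\partial E)\hookrightarrow(M,M\setminus\mathrm{int}\,E)\leftarrow(M,M\setminus E)$ mapping to $(D,\partial D)\hookrightarrow(N,N\setminus\mathrm{int}\,D)\leftarrow(N,N\setminus D)$, is induced by an honest map of pairs, so all squares commute before passing to cohomology and the excision/collar isomorphisms are only used to identify groups. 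Your closing remark, that the same chase can be run with open supports via $j_!\colon H^n_{\mathbf c}(\mathrm{int}\,D)\to H^n_{\mathbf c}(N)$ and the base-change identity over $f^{-1}(\mathrm{int}\,D)=\mathrm{int}\,E$, is an equally valid and arguably cleaner route; either version fully establishes the lemma.
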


We will prove \Cref{main} by considering the following three cases: 
\begin{enumerate}
  \item $\Sigma$ has infinite genus.
  \item $\Sigma$ has finite genus and the set of isolated points $\mathscr I(\Sigma)$ of $\mathscr E(\Sigma)$ is finite.
  \item $\Sigma$ has finite genus and the set of isolated points $\mathscr I(\Sigma)$ of $\mathscr E(\Sigma)$ is infinite.
\end{enumerate} 

\begin{remark}
\textup{If $\Sigma$ is an infinite-type surface of a finite genus, then $\mathscr E(\Sigma)$ is an infinite set.}\label{infinitetypefinitegenusmeansnumberofendsisinfinite}
\end{remark}

Our first result proves \Cref{main} in the case with infinite genus.

\begin{theorem}
    \textup{Let $\Sigma$ be a surface of the infinite genus. Then there exists a degree one map $f\co \Sigma\to \Sigma$ which is not $\pi_1$-injective.} \label{infinitegenus}
\end{theorem}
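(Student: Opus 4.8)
The plan is to realize $\Sigma$ through Richards' representation theorem (\Cref{richard1}) as $\Sigma_\textup{handle}=M\sqcup_{\partial M\equiv\partial N}N$, where $M=(\mathbb S^2\setminus\mathscr E(\Sigma))\setminus\sqcup_{i\in\mathscr A}\textup{int}(D_i)$ is planar and a genus-one piece $S_{1,1}$ is glued along each boundary circle $\partial D_i$; since $\Sigma$ has infinite genus, $|\mathscr A|=g(\Sigma)$ is infinite. Fix one index $i_0\in\mathscr A$ and let $H$ be the copy of $S_{1,1}$ glued at $\partial D_{i_0}$, a compact bordered subsurface with $\partial H=\partial D_{i_0}$. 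Let $\Sigma'$ be the surface obtained from $\Sigma$ by deleting $\textup{int}(H)$ and capping $\partial D_{i_0}$ with the disk $D_{i_0}$ instead of a handle; equivalently, $\Sigma'$ is built from the same data as $\Sigma_\textup{handle}$ but with $S_{1,1}$ replaced by a disk at index $i_0$. The idea is that excising one handle, then re-identifying the result with $\Sigma$, plays the role of the non-injective shift that makes an infinitely generated free group non-Hopfian.

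The conceptual heart of the argument is the claim that $\Sigma'\cong\Sigma$. Because the modification is confined to the compact set $H$, the two surfaces agree outside a compact region, so $\textup{Ends}(\Sigma')=\textup{Ends}(\Sigma)$ as topological spaces. Deleting a single handle leaves the genus infinite, and it changes no end from non-planar to planar or vice versa: by the defining property of the disk collection $\{D_i\}$, a point of $\mathscr E(\Sigma)$ is non-planar precisely when infinitely many $D_i$ accumulate on it, and removing the one disk $D_{i_0}$ leaves infinitely many, so $\textup{Ends}_\textup{np}(\Sigma')=\textup{Ends}_\textup{np}(\Sigma)$. Kerékjártó's classification (\Cref{richard2}) then furnishes an orientation-preserving homeomorphism $\Phi\co\Sigma'\to\Sigma$.

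Next I would build the collapse map. Since $D^2$ is contractible, the identity $\partial H\to\partial D_{i_0}$ extends to a (smooth) degree-one pinch map $p\co H\to D_{i_0}$ that is the identity on $\partial H$ and kills the genus of $H$; a connecting-homomorphism computation confirms $p_*$ is an isomorphism on $H_2(\cdot,\partial)$. Let $c\co\Sigma\to\Sigma'$ equal the identity on the common part $\Sigma\setminus\textup{int}(H)=\Sigma'\setminus\textup{int}(D_{i_0})$ and equal $p$ on $H$; this is continuous, and proper because it is the identity off the compact set $H$. Set $f\coloneqq\Phi\circ c\co\Sigma\to\Sigma$, which is proper and may be taken smooth. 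To compute the degree, choose a smoothly embedded closed disk $D_0$ in $\textup{int}(M)$ away from every $\partial D_i$ and put $D\coloneqq\Phi(D_0)\subset\Sigma$; then $c=\textup{id}$ near $D_0$ and $\Phi$ is a homeomorphism, so $f^{-1}(D)=D_0$ is a single disk mapped homeomorphically and orientation-preservingly onto $D$. Hence $\deg f=+1$ by \Cref{degreeonemapchecking}.

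Finally, non-injectivity. Choose a nonseparating simple closed curve $\gamma\subset\textup{int}(H)$ that generates one of the loops of the handle, together with a basepoint $x_0\in M$ joined to $\gamma$ by an arc disjoint from $\textup{int}(H)$. Being nonseparating, $\gamma$ is nonzero in $H_1(\Sigma;\mathbb Z)$, so $[\gamma]\ne 1$ in $\pi_1(\Sigma,x_0)$; but $c(\gamma)$ lies in the disk $D_{i_0}$ and is null-homotopic, whence $\pi_1(c)([\gamma])=1$. As $\Phi$ is a homeomorphism, $\pi_1(\Phi)$ is an isomorphism, so $\pi_1(f)=\pi_1(\Phi)\circ\pi_1(c)$ fails to be injective, and in particular $f$ is not a homotopy equivalence. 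The one step demanding genuine care is the homeomorphism $\Sigma'\cong\Sigma$ of the second paragraph, namely that removing a single compact handle preserves both the (infinite) genus and the homeomorphism type of the pair $\textup{Ends}_\textup{np}(\Sigma)\subseteq\textup{Ends}(\Sigma)$; once this is in hand, the degree and $\pi_1$ computations are routine applications of \Cref{degreeonemapchecking}.
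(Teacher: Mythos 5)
Your proposal is essentially the paper's own argument: both collapse a compact genus-one piece of $\Sigma$ and then use the classification theorem (\Cref{richard2}) to identify the collapsed surface with $\Sigma$ again, the only differences being cosmetic --- you collapse the handle onto a disk sitting in the Richards model while the paper pinches an arbitrary genus-one compact bordered subsurface to a point, and you certify non-injectivity on $\pi_1$ via a nonseparating curve that survives in $H_1(\Sigma;\mathbb Z)$ while the paper uses the fact that the boundary curve of the genus-one piece is a primitive element of $\pi_1(\Sigma)$. The one unjustified step is your assertion that \Cref{richard2} furnishes an \emph{orientation-preserving} homeomorphism $\Phi\co\Sigma'\to\Sigma$; the theorem only provides some homeomorphism, so a priori $\deg f=-1$ is possible. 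The paper closes exactly this gap by replacing $f$ with $f\circ f$ when $\deg f=-1$, which has degree one and still kills the same class in $\pi_1$; adding that one line makes your argument complete.
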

\begin{proof}
Since $\Sigma$ has infinite genus, there exists a compact bordered subsurface $\mathcal S\subset \Sigma$ such that $\mathcal S$ has genus one and one boundary component. Define $\Sigma'$ as $\Sigma'\coloneqq \Sigma/\mathcal S$, i.e., $\Sigma'$ is the quotient of $\Sigma$ with $\mathcal S$ pinched to a point. Let $q\co\Sigma \to \Sigma'$ be the quotient map. Thus, $\Sigma'$ is also an infinite genus surface. Further, there are compact sets in $K\subset \Sigma$ and $K' \subset\Sigma'$ whose complements are homeomorphic, so the pair $(\mathscr E(\Sigma), \mathscr E_\text{np}(\Sigma))$ is homeomorphic to the pair $(\mathscr E(\Sigma'), \mathscr E_\text{np}(\Sigma'))$. Hence, by \Cref{richard2}, there is a homeomorphism  $\varphi\co\Sigma'\to\Sigma$. 

Let $f \co \Sigma \to \Sigma$ be the composition $f = \varphi\circ q$. 
By \Cref{degreeonemapchecking}, the quotient map $q\co\Sigma\to \Sigma'$ is of degree $\pm 1$. Thus, $\deg(f)=\pm 1$ as homeomorphisms have degree $\pm 1$. Notice that $f$ sends $\partial \mathcal S$ to a point. But $\partial \mathcal S$ does not bound any disk in $\Sigma$, i.e., $\partial \mathcal S$ represents a primitive element of $\pi_1(\Sigma)$, see \cite[Theorem 1.7. and Theorem 4.2.]{MR214087}. Hence, $f$ is not $\pi_1$-injective. If $\deg(f) = 1$, then we are done. Otherwise, we replace $f$ by $f \circ f$ to get a map that has degree one and is not injective on $\pi_1$.
\end{proof}

For the remaining two cases, we use a map from the sphere to the sphere, which has degree $\pm 1$ but with some disks identified. We will replace these disks with appropriate surfaces to get $\Sigma$.

\begin{lemma}
\textup{There exist pairwise disjoint closed disks $\mathcal D_0, \mathcal D_1\subseteq \mathbb S^2$ and a map $f\co \mathbb S^2\to \mathbb S^2$  such that the following hold: }
\begin{itemize}
    \item \textup{$f^{-1}(\mathcal D_0)=\mathcal D_0$ and $f\vert_{\mathcal D_0}\co \mathcal{D}_0\to \mathcal D_0$ is the identity map.}
    \item \textup{$f^{-1}(\mathcal D_1)$ is the union of pairwise-disjoint closed disks $\mathcal D_{1,1}, \mathcal D_{1,2}$, and $\mathcal D_{1,3}$ in $\mathbb S^2$; and $f\vert_{\mathcal{D}_{1k}}\co \mathcal{D}_{1k}\to \mathcal D_{1}$ is a homeomorphism for each $k\in \{1,2,3\}$. }
    \label{foldingmapconstruction} 
\end{itemize}
\textup{Further, there is a loop $\gamma$ in $\mathbb S^2\setminus \textup{int}(\mathcal D_0\cup \mathcal D_{1, 1}\cup \mathcal D_{1,2}\cup \mathcal D_{1,3})$ which is not homotopically trivial in $\mathbb S^2\setminus \textup{int}(\mathcal D_0\cup \mathcal D_{1, 1}\cup \mathcal D_{1,2}\cup \mathcal D_{1,3})$, but such that $f(\gamma)$ is null-homotopic in $\mathbb S^2\setminus\textup{int} (\mathcal D_0\cup \mathcal D_1)$.} \label{foldingmap}
\end{lemma}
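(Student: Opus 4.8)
The plan is to build $f$ as the identity outside a disk and as an explicit ``fold'' inside it. Identify $\mathbb S^2=\mathbb R^2\cup\{\infty\}$ and let $\mathcal D_0$ be the complement of a large open round disk $U=\mathbb S^2\setminus\mathcal D_0$, so that $\mathcal D_0$ is a closed disk about $\infty$. I will arrange that $f$ is the identity on $\mathcal D_0$ and on a collar of $\partial\mathcal D_0$, and that $f$ maps $U$ into itself; then $f^{-1}(\mathcal D_0)=\mathcal D_0$ and $f\vert_{\mathcal D_0}=\textup{id}$, so the first bullet holds automatically. (This is also exactly the hypothesis of \Cref{degreeonemapchecking}, which then gives $\deg f=\pm1$ for the later applications, although the degree is not needed for the present statement.) Everything thus reduces to constructing $f$ on $\overline U$.

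The model for the fold is the accordion fold of a rectangle. Writing $R=[0,3]\times[0,1]$ and $F\co[0,3]\to[0,1]$ for the piecewise-linear zigzag equal to $x$, $2-x$, $x-2$ on $[0,1],[1,2],[2,3]$ respectively, the map $(x,y)\mapsto(F(x),y)$ carries each of the three subrectangles homeomorphically onto $[0,1]\times[0,1]$, the middle one reversing orientation and the outer two preserving it; over an interior point of the target there are exactly three preimages, one in each subrectangle. Transplanting this model into $\overline U$, I place inside $U$ the target disk $\mathcal D_1$ together with three disks $\mathcal D_{1,1},\mathcal D_{1,2},\mathcal D_{1,3}$ playing the role of the three sheets, and round the two fold lines so that the three sheets become \emph{pairwise disjoint} disks joined through the complementary region. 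This produces a continuous $f$ on $\overline U$ mapping each $\mathcal D_{1,k}$ homeomorphically onto $\mathcal D_1$ (two preserving, one reversing orientation), equal to the identity near $\partial U=\partial\mathcal D_0$, and carrying the complementary region $P\coloneqq \overline U\setminus\textup{int}(\mathcal D_{1,1}\cup \mathcal D_{1,2}\cup\mathcal D_{1,3})$ into the annulus $A\coloneqq \overline U\setminus \textup{int}\,\mathcal D_1$; this is possible because the two oppositely-oriented sheets cancel along the fold curve, so the number of preimages drops from three near $\mathcal D_1$ to one near $\partial U$. Granting $f(P)\subseteq A$, the second bullet is immediate: $f(P)\cap\textup{int}\,\mathcal D_1=\varnothing$, so $f^{-1}(\mathcal D_1)$ meets $P$ only along the circles $\partial\mathcal D_{1,k}$, whence $f^{-1}(\mathcal D_1)=\mathcal D_{1,1}\cup\mathcal D_{1,2}\cup\mathcal D_{1,3}$, each mapped homeomorphically onto $\mathcal D_1$.

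For the loop, note that $f^{-1}\big(\textup{int}(\mathcal D_0\cup\mathcal D_1)\big)=\textup{int}(\mathcal D_0\cup\mathcal D_{1,1}\cup\mathcal D_{1,2}\cup\mathcal D_{1,3})$, so $f$ restricts to a map $g\co P\to A$ of the four-holed sphere into the annulus. Now $\pi_1(P)$ is free of rank three, generated by loops $a_1,a_2,a_3$ encircling $\mathcal D_{1,1},\mathcal D_{1,2},\mathcal D_{1,3}$, while $\pi_1(A)\cong\mathbb Z$ is abelian. I take $\gamma$ to represent the commutator $[a_1,a_2]=a_1a_2a_1^{-1}a_2^{-1}$: it is non-trivial in $\pi_1(P)$, hence not null-homotopic in $\mathbb S^2\setminus\textup{int}(\mathcal D_0\cup\mathcal D_{1,1}\cup\mathcal D_{1,2}\cup\mathcal D_{1,3})$, yet $g_*([a_1,a_2])=[g_*a_1,g_*a_2]=0$ since $\pi_1(A)$ is abelian, so $f(\gamma)=g(\gamma)$ is null-homotopic in $A=\mathbb S^2\setminus\textup{int}(\mathcal D_0\cup\mathcal D_1)$, as required.

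The main obstacle is the construction in the second paragraph: one must arrange the fold so that the preimage of $\mathcal D_1$ consists of three \emph{disjoint} disks (rather than, say, bands), each mapped homeomorphically, while simultaneously guaranteeing that the complementary four-holed sphere is carried into the annulus $A$. The latter is the real constraint: a map of the four-holed sphere into $A$ (which is homotopy equivalent to $S^1$) extending prescribed boundary homeomorphisms exists only when the signed degrees on the four boundary circles cancel, and this forces exactly two of the three homeomorphisms $\mathcal D_{1,k}\to\mathcal D_1$ to preserve orientation and one to reverse it (equivalently, it makes $\deg f=\pm1$). Carrying out the fold together with this disjointness bookkeeping is the delicate point; once $f$ is in hand, the remaining assertions are formal.
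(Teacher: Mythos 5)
Your proposal is correct and shares the paper's overall architecture---a self-map of $\mathbb S^2$ that is the identity on $\mathcal D_0$ and folds three disks onto $\mathcal D_1$, with the complementary four-holed sphere landing in the annulus---but both halves are executed differently, and the differences are worth recording. For the construction, the paper builds $f$ cell by cell: it fixes a CW-structure on the four-holed sphere (four boundary circles, three connecting arcs, one $2$-cell), prescribes $f$ on the $1$-skeleton with one of the three boundary homeomorphisms orientation-reversing, and verifies by an explicit cancellation that the attaching circle of the $2$-cell maps to a null-homotopic loop in the annulus, so that $f$ extends. Your accordion model $(x,y)\mapsto(F(x),y)$ packages the same $2{+}1$ orientation pattern more concretely; note that if you take $\mathcal D_1$ to be a round disk in the \emph{interior} of the image square, its preimage under the product map is automatically three disjoint disks each mapped homeomorphically, so the ``rounding of fold lines'' you flag as delicate is not actually needed. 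The genuinely non-trivial step is instead the extension of $f$ from the rectangle to the rest of $\overline U$ rel the identity on $\partial U$, and the criterion you state at the end (the signed degrees on the boundary circles must cancel, forcing the $2{+}1$ pattern) is both necessary and sufficient here precisely because $\pi_1$ of the annulus is abelian; this is the exact content of the paper's null-homotopy computation for $f\circ\varphi$, so you should promote that remark from a side constraint to the actual verification. For the loop, your route is a genuine improvement: the paper takes $\gamma$ to be (essentially) the product of the loops around $\mathcal D_{1,1}$ and $\mathcal D_{1,2}$, whose images cancel only because of the specific orientation-reversal built into the map, whereas your commutator $[a_1,a_2]$ is non-trivial in the rank-three free group $\pi_1(P)$ and is killed by \emph{any} map into the annulus since $\pi_1(A)\cong\mathbb Z$ is abelian. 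This decouples the choice of $\gamma$ from the orientation bookkeeping entirely.
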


\begin{proof}
For each $k\in \{0,1,2,3\}$, choose $(a_k, b_k)\in \mathbb R^2$ such that if we define $$\mathcal B_k\coloneqq\left\{(x,y)\in \mathbb R^2 : (x-a_k)^2+(y-b_k)^2\leq 1\right\},$$ then $\{\mathcal B_0, \mathcal B_1, \mathcal B_2, \mathcal B_3\}$ is a pairwise-disjoint collection of closed disks. 

Define $X\coloneqq\mathbb S^2\setminus\bigcup_{i=0}^3\text{int}(\mathcal B_i)$ and $Y\coloneqq\mathbb S^2\setminus\bigcup_{i=0}^1\text{int}(\mathcal B_i)$. Next, define a map $f\colon \partial X\to Y$ as follows:
\begin{itemize}
    \item  $f\vert_{\partial \mathcal B_k} \co \partial \mathcal B_k\to \partial \mathcal  B_k$ is the identity map for each $k\in\{0,1\}$;
    \item $f\vert_{\partial \mathcal B_2} \co \partial \mathcal B_2 \to \partial \mathcal  B_1$ is defined as $f(x,y)\coloneqq (-x+a_2+a_1, y-b_2+b_1)$ for all $(x,y)\in \partial \mathcal B_2$.
    
    \item $f\vert_{\partial \mathcal B_3} \co \partial \mathcal B_3\to \partial \mathcal  B_1$ is defined as $f(x,y)\coloneqq (x-a_3+a_1, y-b_3+b_1)$ for all $(x,y)\in \partial \mathcal B_3$.
\end{itemize}
\begin{figure}[ht]$$\adjustbox{trim={0.0\width} {0.0\height} {0.0\width} {0.0\height},clip}{\def\svgwidth{1\linewidth}
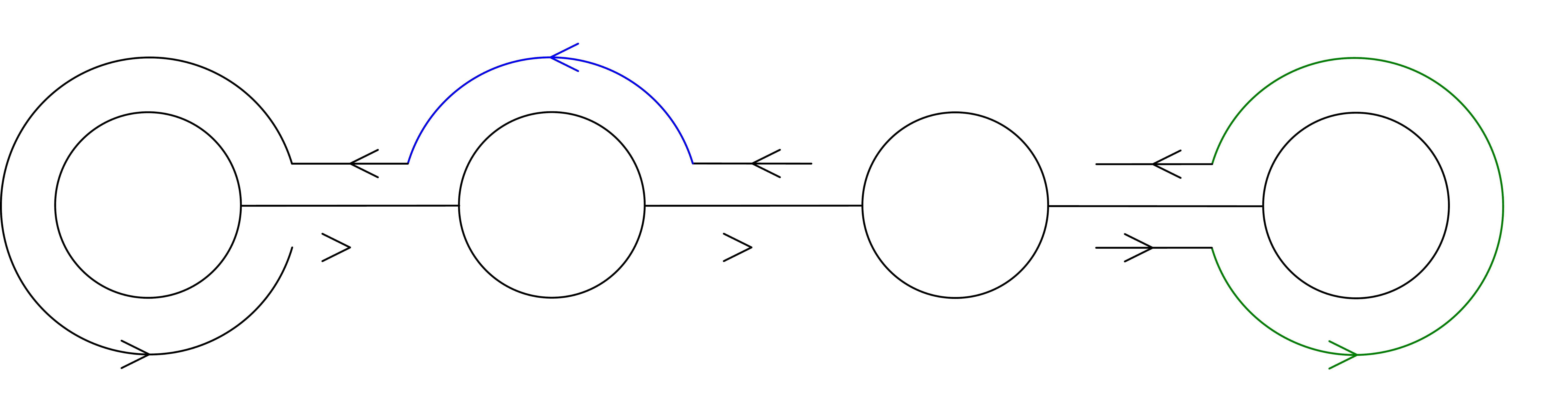
}$$
\caption{The four-holed sphere $X$ by attaching a $2$-cell.}\label{attaching}
\end{figure}

For each $k\in \{0,1,2\}$, let $\gamma_k\co[0,1]\hookrightarrow X$ be an embedding such that $\text{im}(\gamma_k)\cap \partial X$ consists of $\gamma_k(0)=(a_k+1,b_k)\in \partial\mathcal B_k$ and  $\gamma_k(1)=(a_{k+1}-1,b_{k+1})\in \partial\mathcal B_{k+1}$.

Define $\Gamma_0\co [0,1]\to Y$ as $\Gamma_0(t)\coloneqq\gamma_0(t)$ for all $t\in [0,1]$. Let $\Gamma_1, \Gamma_2\colon [0,1]\to Y$ be the constant loops based at the points $(a_1+1,b_1)\in  \partial Y$ and $(a_1-1,b_1)\in  \partial Y$, respectively.

Next, define $X^{(1)}\coloneqq \partial X\cup \text{im}(\gamma_0)\cup \text{im}(\gamma_1)\cup\text{im}(\gamma_2)$. Extend $f\colon \partial X\to Y$ to a map $X^{(1)}\to Y$, which we again denote by $f\co X^{(1)}\to Y$, by mapping $\gamma_0$ onto $\Gamma_0$ by the identity, and, for each $k=1,2$, mapping $\gamma_k$ to the constant loop $\Gamma_k$.

Let $\theta_0$ (resp. $\theta_3$) be the simple loop that traverses $\partial \mathcal B_0$ (resp. $\partial \mathcal B_3$) in the counter-clockwise direction starting from $(a_0+1,b_0)$ (resp. $(a_3-1,b_3)$).

Let $\theta_{1,l}$ (resp. $\theta_{1,u}$) be the simple arc that traverses $\partial \mathcal B_1\cap \{y\leq b_1\}$ (resp. $\partial \mathcal B_1\cap \{y\geq b_1\}$) counter-clockwise direction. Similarly, define $\theta_{2, l}$ and $\theta_{2,u}$.

Now,  $X\cong X^{(1)}\cup_\varphi \mathbb D^2$, (see \Cref{attaching})  where the attaching map $\varphi\colon \mathbb S^1\to X^{(1)}$ can be described as $$\varphi\coloneqq \theta_0*\gamma_0*\theta_{1,l}*\gamma_1*\theta_{2,l}*\gamma_2*\theta_3*\overline \gamma_2*\theta_{2,u}*\overline \gamma_1*\theta_{1,u}*\overline{\gamma_0}.$$

Notice that $f(\gamma_1)=\Gamma_1$ and $f(\gamma_2)=\Gamma_2$ are constant loops. Also, as in \Cref{map-extension},  $\overline{f\circ\theta_{1,l}}=f\circ \theta_{2,l}$ and $\overline{f\circ\theta_{1,u}}=f\circ \theta_{2,u}$. Thus, $f\circ\varphi$ is homotopic to $(f\circ\theta_0)*\Gamma_0*(f\circ\theta_3)*\overline{\Gamma_0}$. 

\begin{figure}[ht]$$\adjustbox{trim={0.0\width} {0.0\height} {0.0\width} {0.0\height},clip}{\def\svgwidth{1\linewidth}
\begingroup%
  \makeatletter%
  \providecommand\color[2][]{%
    \errmessage{(Inkscape) Color is used for the text in Inkscape, but the package 'color.sty' is not loaded}%
    \renewcommand\color[2][]{}%
  }%
  \providecommand\transparent[1]{%
    \errmessage{(Inkscape) Transparency is used (non-zero) for the text in Inkscape, but the package 'transparent.sty' is not loaded}%
    \renewcommand\transparent[1]{}%
  }%
  \providecommand\rotatebox[2]{#2}%
  \newcommand*\fsize{\dimexpr\f@size pt\relax}%
  \newcommand*\lineheight[1]{\fontsize{\fsize}{#1\fsize}\selectfont}%
  \ifx\svgwidth\undefined%
    \setlength{\unitlength}{1090.21350407bp}%
    \ifx\svgscale\undefined%
      \relax%
    \else%
      \setlength{\unitlength}{\unitlength * \real{\svgscale}}%
    \fi%
  \else%
    \setlength{\unitlength}{\svgwidth}%
  \fi%
  \global\let\svgwidth\undefined%
  \global\let\svgscale\undefined%
  \makeatother%
  \begin{picture}(1,0.48895777)%
    \lineheight{1}%
    \setlength\tabcolsep{0pt}%
    \put(0,0){\includegraphics[width=\unitlength,page=1]{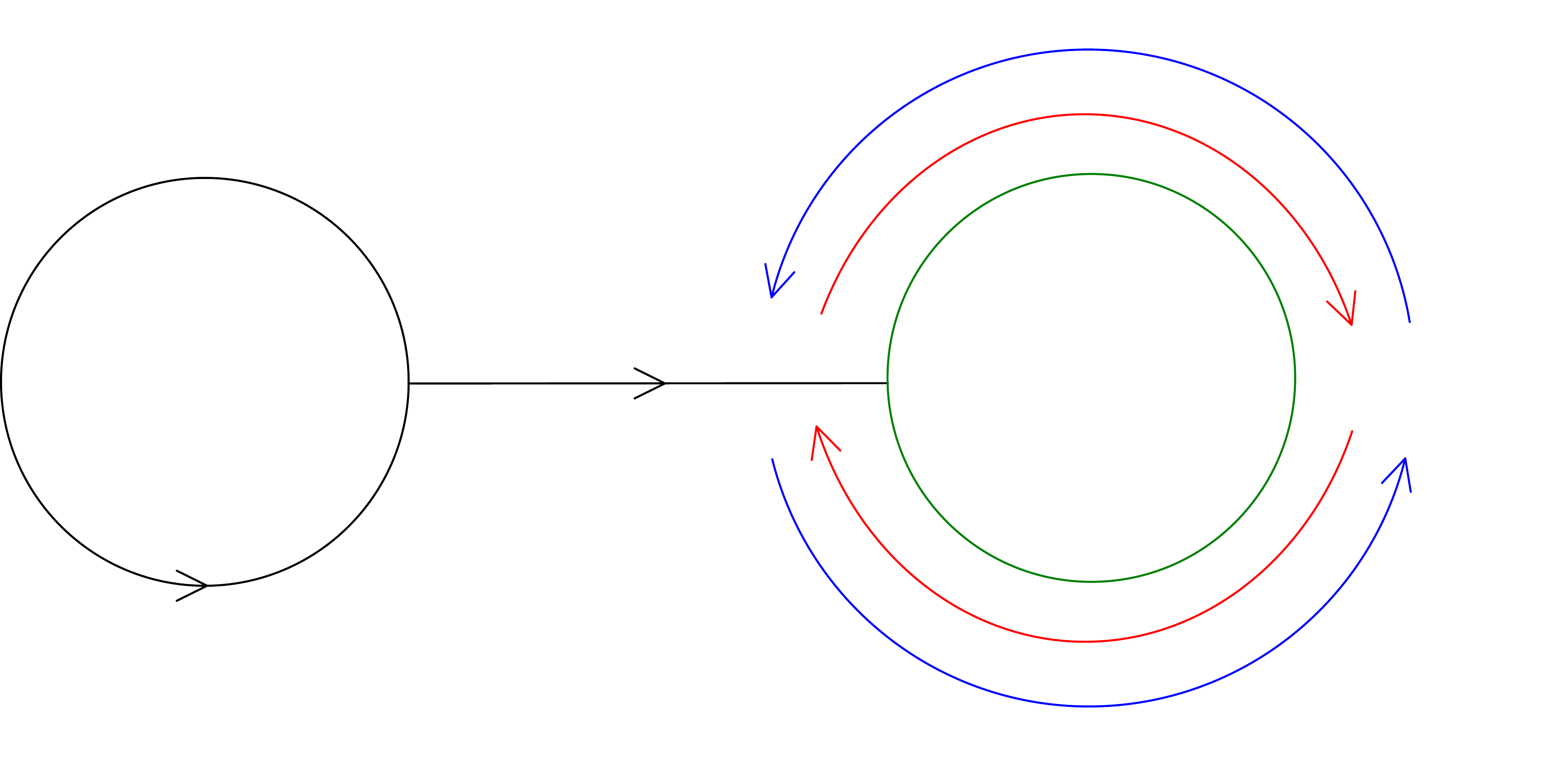}}%
    \put(0.12944729,0.08961527){\makebox(0,0)[lt]{\lineheight{0}\smash{\begin{tabular}[t]{l}$f\circ\theta_0$\end{tabular}}}}%
    \put(0.65897446,0.09217198){\makebox(0,0)[lt]{\lineheight{0}\smash{\begin{tabular}[t]{l}$\color{green}f\circ\theta_3$\end{tabular}}}}%
    \put(0,0){\includegraphics[width=\unitlength,page=2]{flippingmap2.pdf}}%
    \put(0.65897446,0.05227109){\makebox(0,0)[lt]{\lineheight{0}\smash{\begin{tabular}[t]{l}$\color{red}f\circ\theta_{2l}$\end{tabular}}}}%
    \put(0.65897446,0.00961824){\makebox(0,0)[lt]{\lineheight{0}\smash{\begin{tabular}[t]{l}$\color{blue}f\circ\theta_{1l}$\end{tabular}}}}%
    \put(0.65897446,0.42650598){\makebox(0,0)[lt]{\lineheight{0}\smash{\begin{tabular}[t]{l}$\color{red}f\circ\theta_{2u}$\end{tabular}}}}%
    \put(0.65897446,0.46778222){\makebox(0,0)[lt]{\lineheight{0}\smash{\begin{tabular}[t]{l}$\color{blue}f\circ\theta_{1u}$\end{tabular}}}}%
    \put(0.42001426,0.25709107){\makebox(0,0)[lt]{\lineheight{0}\smash{\begin{tabular}[t]{l}$\Gamma_0$\end{tabular}}}}%
  \end{picture}%
\endgroup%

}$$
\caption{The map on the $X^{(1)}$}\label{map-extension}

\end{figure}

If $r\co Y\cong \mathbb S^1\times [0,1]\to \mathbb S^1$ is the  projection  then $r\circ f\circ\theta_0$ and $r\circ f\circ\theta_3$ traverse $\mathbb S^1$ in opposite directions. Since $r$ is a strong deformation retract, $(f\circ\theta_0)*\Gamma_0*(f\circ\theta_3)*\overline{\Gamma_0}$, and hence $f\circ\varphi$ is null-homotopic. Now, the null-homotopic map $f\circ\varphi \colon \mathbb S^1\to Y$ can be extended to a map $\mathbb D^2\to Y$. Thus $f\co X^{(1)}\to Y$ can be extended to a map $X\cong X^{(1)}\cup_\varphi\mathbb D^2\to Y$, which will be again denoted by $f\co X\to Y$. 

Note that every homeomorphism $\mathbb S^1\to \mathbb S^1$ can be extended to a homeomorphism $\mathbb D^2\to \mathbb D^2$ naturally. Thus, we can extend $f\co X\to Y$ to a map $\mathbb S^2\to \mathbb S^2$, which will be again denoted by $f\co \mathbb S^2\to \mathbb S^2$. Let $\mathcal D_0$ (resp. $\mathcal D_1$) be any closed disk, which is contained in $\text{int}(\mathcal B_0)$ (resp. $\text{int}(\mathcal B_1)$).

Finally, observe that if $\gamma=\theta_{1u}*\theta_{1l}*\gamma_1*\theta_{2l}*\theta_{2u}*\overline{\gamma_1}$, then $\gamma$ is a loop in $\mathbb S^2\setminus \textup{int}(\mathcal D_0\cup \mathcal D_{1, 1}\cup \mathcal D_{1,2}\cup \mathcal D_{1,3})$ which is not homotopically trivial in $\mathbb S^2\setminus \textup{int}(\mathcal D_0\cup \mathcal D_{1, 1}\cup \mathcal D_{1,2}\cup \mathcal D_{1,3})$, but $f(\gamma)$ is null-homotopic in $\mathbb S^2\setminus \textup{int}(\mathcal D_0\cup \mathcal D_1)$, as claimed.
\end{proof}


We now prove \Cref{main} in the two remaining cases, in both of which we have a finite genus surface. Note that for a finite genus surface, all ends are planar, so in applying \Cref{richard2}, it suffices to consider the genus and the space of ends.

\begin{theorem}
    \textup{Let $\Sigma$ be a finite genus infinite-type surface  such that $\mathscr E(\Sigma)$ has finitely many isolated points. Then there is a degree one map $f\co \Sigma\to \Sigma$ which is not $\pi_1$-injective.}\label{finitegenswithfinitelymanyisolatedends}
\end{theorem}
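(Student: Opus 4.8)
The plan is to realize $\Sigma$ by filling the disks of the folding map of \Cref{foldingmap} with end-pieces arranged so that $f$ extends to a degree-one self-map, the non-injectivity coming from the loop $\gamma$. Since $\Sigma$ has finite genus every end is planar, so by \Cref{richard2} a surface homeomorphic to $\Sigma$ is determined by its genus $g(\Sigma)$ together with the homeomorphism type of its space of ends. The crucial preliminary is a structural description of $\mathscr E(\Sigma)$. By \Cref{infinitetypefinitegenusmeansnumberofendsisinfinite} the set $\mathscr E(\Sigma)$ is infinite, and by hypothesis its set of isolated points $\mathscr I(\Sigma)$ is finite. I would first check that $V \coloneqq \mathscr E(\Sigma)\setminus \mathscr I(\Sigma)$ is a nonempty \emph{clopen} subset with no isolated points: it is clopen because the finite set $\mathscr I(\Sigma)$ is closed and open, and it is perfect because a point isolated in $V$ would, $\mathscr I(\Sigma)$ being finite, also be isolated in $\mathscr E(\Sigma)$. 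Being a nonempty, compact, perfect, totally disconnected, metrizable space, $V$ is a Cantor set. Hence $\mathscr E(\Sigma) \cong \mathscr I(\Sigma) \sqcup V$, and since a disjoint union of finitely many Cantor sets is again a Cantor set, $V \cong V \sqcup V \sqcup V$.

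Next I would build two model surfaces. Fill the disk $\mathcal D_0$, in both source and target, with one fixed non-compact piece $P_0$ carrying all of the genus $g(\Sigma)$ together with the $|\mathscr I(\Sigma)|$ isolated ends, and let $f$ be the identity there; this is consistent with $f^{-1}(\mathcal D_0)=\mathcal D_0$ and $f|_{\mathcal D_0}=\mathrm{id}$. In the target, remove a Cantor set $V\subset\operatorname{int}(\mathcal D_1)$; in the source, remove from each $\mathcal D_{1,k}$ the Cantor set $V_k \coloneqq (f|_{\mathcal D_{1,k}})^{-1}(V)$, so that $f$ carries $\mathcal D_{1,k}\setminus V_k$ homeomorphically onto $\mathcal D_1\setminus V$. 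Place no ends anywhere else, and call the results $\Sigma_{\mathrm{src}}$ and $\Sigma_{\mathrm{tgt}}$. Both have genus $g(\Sigma)$, and their end spaces are $\mathscr I(\Sigma)\sqcup V_1\sqcup V_2\sqcup V_3$ and $\mathscr I(\Sigma)\sqcup V$; by the previous paragraph both are homeomorphic to $\mathscr E(\Sigma)$, so \Cref{richard2} gives $\Sigma_{\mathrm{src}}\cong \Sigma \cong \Sigma_{\mathrm{tgt}}$. The folding map, restricted to the complement of the ends and glued to the identity on $P_0$ and to the homeomorphisms $\mathcal D_{1,k}\setminus V_k \to \mathcal D_1\setminus V$, then defines a continuous map $f\co \Sigma_{\mathrm{src}}\to \Sigma_{\mathrm{tgt}}$; properness follows because $f$ is a homeomorphism near each end and is proper on the compact complementary region.

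Then I would verify the three required properties. Degree one follows from \Cref{degreeonemapchecking} applied to a small disk $D\subset\operatorname{int}(\mathcal D_0)$, where $f$ is the identity and $f^{-1}(D)=D$, giving $\deg f = +1$. For non-injectivity, take the loop $\gamma$ of \Cref{foldingmap}: its image $f(\gamma)$ is null-homotopic inside $Y=\mathbb S^2\setminus\operatorname{int}(\mathcal D_0\cup\mathcal D_1)$, and $Y$ embeds in $\Sigma_{\mathrm{tgt}}$, so $f(\gamma)$ is trivial in $\pi_1(\Sigma_{\mathrm{tgt}})$; on the other hand $\gamma$ encircles $\mathcal D_{1,1}\cup\mathcal D_{1,2}$, so in $\Sigma_{\mathrm{src}}$ it separates the nonempty end set $V_1\cup V_2$ from $V_3$ (and from the ends of $P_0$), and a separating curve whose two complementary pieces are both non-compact is essential, whence $\gamma$ is nontrivial in $\pi_1(\Sigma_{\mathrm{src}})$. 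Thus $f$ is not $\pi_1$-injective. Finally, pre- and post-composing with homeomorphisms $\Sigma\to\Sigma_{\mathrm{src}}$ and $\Sigma_{\mathrm{tgt}}\to\Sigma$ yields a self-map of $\Sigma$ of degree $\pm1$ that is not $\pi_1$-injective; if the degree is $-1$ I would replace it by its square, exactly as in \Cref{infinitegenus}.

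I expect the main obstacle to be the structural input rather than the surgery: the whole argument hinges on the observation that finiteness of $\mathscr I(\Sigma)$ forces the non-isolated part of $\mathscr E(\Sigma)$ to be a clopen Cantor set, which is exactly what makes the three-to-one folding compatible with the homeomorphism type of the ends. The second delicate point is confirming that $\gamma$ remains essential after the disks are filled with end-pieces, i.e. that both complementary sides carry ends; this is why it matters that the Cantor factor be distributed so that $V_3$ (or the ends of $P_0$) lie on the far side of $\gamma$.
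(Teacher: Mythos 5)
Your proposal is correct and follows essentially the same route as the paper: realize $\Sigma$ as two models built from the folding map of \Cref{foldingmap} by placing the genus and the finitely many isolated ends in $\mathcal D_0$ and a Cantor set of ends in $\mathcal D_1$ (resp.\ its three preimages), identify both models with $\Sigma$ via \Cref{richard2}, and use $\gamma$ to kill $\pi_1$-injectivity. The only cosmetic difference is that you certify $\gamma$ is essential in the source by noting it is freely homotopic to a separating curve with both complementary sides non-compact, whereas the paper invokes the injectivity of the factors of an amalgamated free product; both are fine.
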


\begin{proof}
Let $\mathscr I(\Sigma)$ be the set of all isolated points of $\mathscr E(\Sigma)$, let $k\in\mathbb{N}\cup\{0\}$ be the cardinality of $\mathscr{I}(\Sigma)$, and let $g$ be the genus of $\Sigma$. Then $\mathscr C(\Sigma)\coloneqq\mathscr E(\Sigma)\setminus\mathscr I(\Sigma)$ is a non-empty, perfect, compact, totally-disconnected, metrizable space as it is infinite (by \Cref{infinitetypefinitegenusmeansnumberofendsisinfinite}) and has no isolated points. Thus $\mathscr C(\Sigma)$ is a Cantor space (see \cite[Theorem 8 of Chapter 12]{MR0488059}). 

Let $\mathcal D_0, \mathcal D_1, \mathcal D_{1,1},\mathcal D_{1, 2},\mathcal D_{1, 3}\subseteq \mathbb S^2$, $f\co\mathbb S^2\to \mathbb S^2$, and $\gamma$ be as in the conclusion of \Cref{foldingmapconstruction}. Also, let $C_1\subset \text{int}(\mathcal D_1)$ be a subset homeomorphic to the Cantor set, and let $\mathscr I\subset \text{int}(\mathcal D_0)$ be a set consisting of $k$ points (hence homeomorphic to $\mathscr I(\Sigma)$). Now, define $C_{1, j}$ as $C_{1, j} \coloneqq f^{-1}(C_1)\cap \mathcal D_{1, j}$ for $j=1,2,3$. Note that each  $C_{1, j}$ is homeomorphic to the Cantor set. See \Cref{cantortree}.

As $f^{-1}(\mathcal D_0)=\mathcal D_0$ and $f\vert_{\mathcal D_0}\co \mathcal{D}_0\to \mathcal D_0$ is the identity map, we can say that $f^{-1}(\mathscr I) = \mathscr{I}$.
Let $\Sigma_1$ be the surface obtained from $\mathbb S^2\setminus (\mathscr I\cup C_1)$ by attaching $g$ handles along disjoint disks $\Delta_{k}\subset \text{int}(\mathcal D_0)\setminus \mathscr I$, $1\leq k \leq g$, and let $\Sigma_2$ be the surface obtained from $\mathbb S^2\setminus (\mathscr I\cup C_{1, 1}\cup C_{1, 2}\cup C_{1, 3})$ by attaching $g$ handles along the (same) disks $\Delta_{k}$, $1\leq k \leq g$. Then $f$ induces a proper map, which we also call $f$, from $\Sigma_2$ to $\Sigma_1$. By \Cref{degreeonemapchecking}, $\deg(f)=\pm 1$. 

Further, we claim that $f\co\Sigma_2\to\Sigma_1$ is not injective on $\pi_1$. Namely, the fundamental group of $\Sigma_2$ is the amalgamated free product of four groups, one of which is $\pi_1(\mathbb S^2\setminus \textup{int}(\mathcal D_0\cup\mathcal D_{1, 1}\cup\mathcal D_{1,2}\cup\mathcal D_{1,3}))$. 
As $\gamma$ is not homotopic to the trivial loop in $\mathbb S^2\setminus \textup{int}(\mathcal D_0\cup\mathcal D_{1, 1}\cup\mathcal D_{1,2}\cup\mathcal D_{1,3})$, and components of an amalgamated free product inject, $\gamma$ is not homotopic to the trivial loop in $\Sigma_2$. However, $f(\gamma)$ is homotopic to the trivial loop in $\mathbb S^2\setminus \textup{int}(\mathcal D_0\cup\mathcal D_1)$ and hence in $\Sigma_1$. Therefore, $f$ is not injective on $\pi_1$.

\begin{figure}[ht]$$\adjustbox{trim={0.0\width} {0.0\height} {0.13\width} {0.0\height},clip}{\def\svgwidth{1\linewidth}
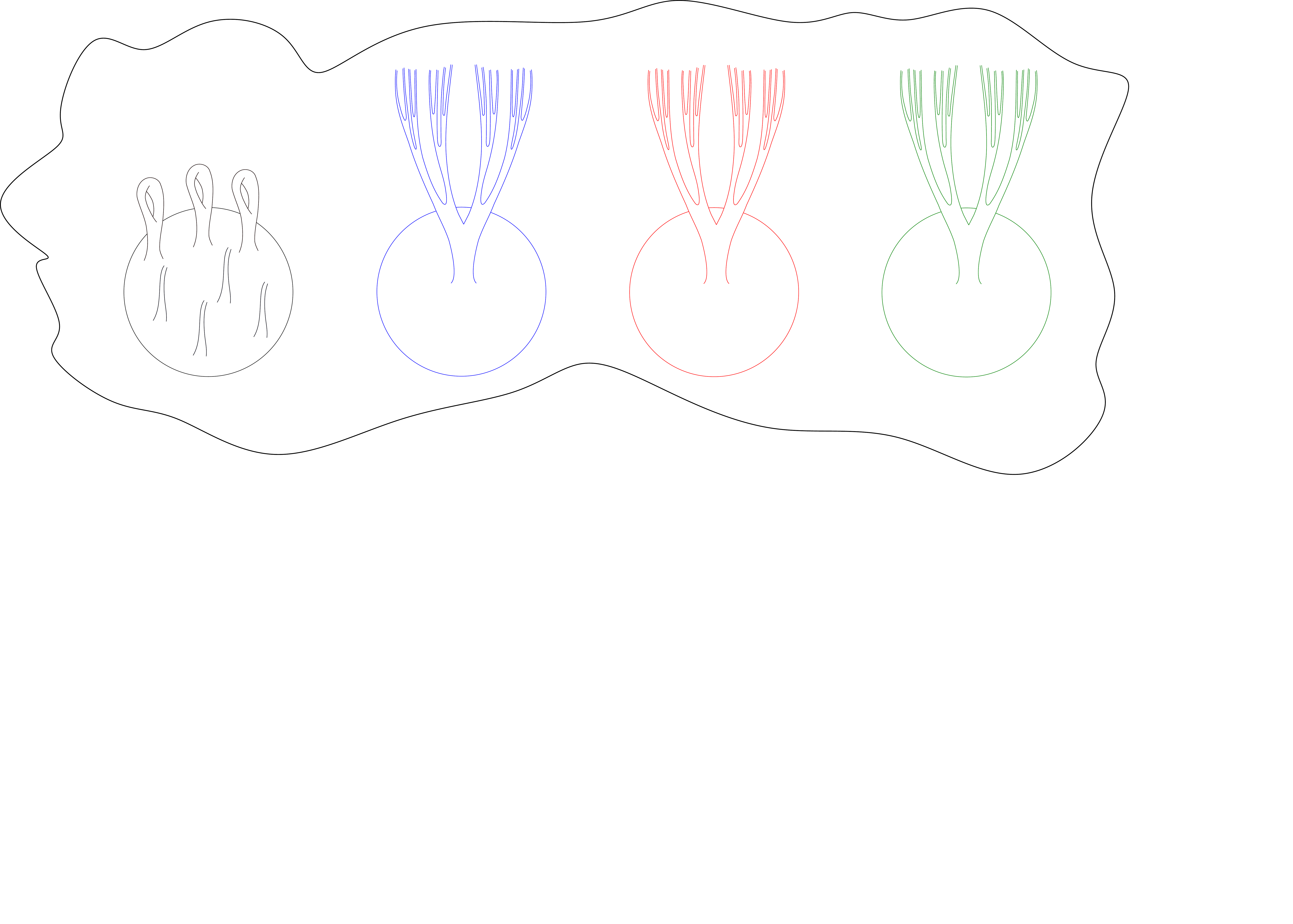
}$$
\caption{A non $\pi_1$-injective degree $\pm 1$ map $f\colon \Sigma\to \Sigma$, where $g=3$ and $|\mathscr I|=4$.}\label{cantortree}

\end{figure}

Both $\Sigma_1$ and $\Sigma_2$ have genus the same as $\Sigma$, and the space of ends homeomorphic to that of $\Sigma$ (as a finite disjoint union of Cantor spaces is a Cantor space by the universality of the Cantor set) with all ends planar. Hence, by \Cref{richard2}, both $\Sigma_1$ and $\Sigma_2$ are homeomorphic to $\Sigma$.

Identifying $\Sigma_1$ and $\Sigma_2$ with $\Sigma$ by homeomorphisms, we get a proper map $f\co\Sigma\to\Sigma$ which is not injective on $\pi_1$. As homeomorphisms have degree $\pm 1$, it follows that $\deg(f)=\pm 1$. Replacing $f$ by $f\circ f$ if necessary, we obtain a proper map of degree one  that is not injective on $\pi_1$.
\end{proof}

\begin{theorem}
    \textup{Let $\Sigma$ be a finite genus surface  such that $\mathscr E(\Sigma)$ has infinitely many isolated points. Then there is a degree one map $f\co \Sigma\to \Sigma$ which is not $\pi_1$-injective.}\label{finitegenswithINfinitelymanyisolatedends}
\end{theorem}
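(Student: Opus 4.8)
The plan is to follow the same template as \Cref{finitegenswithfinitelymanyisolatedends}: I would build two surfaces $\Sigma_1,\Sigma_2$ out of the sphere together with the folding map of \Cref{foldingmap}, and then recognize both as copies of $\Sigma$ via \Cref{richard2}. The new point is how to distribute the space of ends between the two disks $\mathcal D_0$ (where $f$ is the identity) and $\mathcal D_1$ (where $f$ is three-to-one). In the finite-isolated-points case one places the Cantor part in $\mathcal D_1$, exploiting that a finite union of Cantor sets is a Cantor set; here, since $\mathscr E(\Sigma)$ need not contain a clopen Cantor set, I would instead place almost all of $\mathscr E(\Sigma)$ in $\mathcal D_0$ and only a \emph{single} isolated end $b$ in $\mathcal D_1$. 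Concretely, fix the disks $\mathcal D_0,\mathcal D_1,\mathcal D_{1,1},\mathcal D_{1,2},\mathcal D_{1,3}$, the map $f\co\mathbb S^2\to\mathbb S^2$, and the loop $\gamma$ of \Cref{foldingmap}; choose an isolated point $b\in\mathscr E(\Sigma)$ (infinitely many exist by hypothesis) and embed $\mathscr E(\Sigma)$ in $\mathbb S^2$ so that the clopen set $\mathscr E(\Sigma)\setminus\{b\}$ lands in $\text{int}(\mathcal D_0)$ while $b$ lands as a single point in $\text{int}(\mathcal D_1)$.

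The crucial ingredient, and the step I expect to be the heart of the matter, is a Hilbert-hotel lemma: if $E$ is a compact metrizable space with infinitely many isolated points, then $E\sqcup\{\textup{pt}\}\cong E$. I would prove this directly by a shift. Pick distinct isolated points $p_1,p_2,\dots$ converging to some non-isolated $q\in E$, and define $\phi$ on $E\sqcup\{a\}$ by $a\mapsto p_1$, $p_n\mapsto p_{n+1}$, and $x\mapsto x$ for every other point. This is a bijection onto $E$; it is automatically continuous at each isolated point, and continuity at a non-isolated point is immediate since only $q$ is affected, through $p_n\mapsto p_{n+1}\to q$. Thus $\phi$ is a continuous bijection from a compact space to a Hausdorff space, hence a homeomorphism, and iterating gives $E\sqcup\{\textup{finitely many points}\}\cong E$.

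With this in hand the two surfaces go through as before. Let $A\coloneqq\mathscr E(\Sigma)\setminus\{b\}\subset\text{int}(\mathcal D_0)$. Define $\Sigma_1$ by removing $A\cup\{b\}$ from $\mathbb S^2$ and attaching $g$ handles along $g$ disjoint disks in $\text{int}(\mathcal D_0)\setminus A$, so that $\Sigma_1$ has genus $g$ and end space $\cong\mathscr E(\Sigma)$, whence $\Sigma_1\cong\Sigma$ by \Cref{richard2}. Since $f^{-1}(\mathcal D_0)=\mathcal D_0$ with $f\vert_{\mathcal D_0}$ the identity and $f^{-1}(b)=\{b_1,b_2,b_3\}$ with one point in each $\mathcal D_{1,j}$, the map $f$ induces a proper map $\Sigma_2\to\Sigma_1$, where $\Sigma_2$ is obtained from $\mathbb S^2\setminus\big(A\cup\{b_1,b_2,b_3\}\big)$ by attaching handles along the same $g$ disks. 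The end space of $\Sigma_2$ is $A\sqcup\{b_1,b_2,b_3\}=(\mathscr E(\Sigma)\setminus\{b\})\sqcup\{\textup{3 points}\}$, which is homeomorphic to $\mathscr E(\Sigma)$ by the Hilbert-hotel lemma; as it again has genus $g$ and only planar ends, \Cref{richard2} gives $\Sigma_2\cong\Sigma$.

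Finally I would read off the two required properties. Restricting $f$ to a small disk in $\mathcal D_0$ on which it is the identity, \Cref{degreeonemapchecking} gives $\deg(f)=\pm1$. For non-injectivity, the four boundary circles of the core four-holed sphere $Q=\mathbb S^2\setminus\text{int}(\mathcal D_0\cup\mathcal D_{1,1}\cup\mathcal D_{1,2}\cup\mathcal D_{1,3})$ are all essential in $\Sigma_2$ (on the $\mathcal D_0$ side because $A$ is infinite, and on each $\mathcal D_{1,j}$ side because of the puncture $b_j$), so $\pi_1(Q)$ injects into $\pi_1(\Sigma_2)$; hence $\gamma$ remains non-trivial in $\Sigma_2$, while $f(\gamma)$ is null-homotopic in $\mathbb S^2\setminus\text{int}(\mathcal D_0\cup\mathcal D_1)$ and thus in $\Sigma_1$. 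Identifying $\Sigma_1,\Sigma_2$ with $\Sigma$ produces a degree $\pm1$ map that is not $\pi_1$-injective, and replacing $f$ by $f\circ f$ if necessary yields degree one. The only genuinely new work beyond \Cref{finitegenswithfinitelymanyisolatedends} is the shift lemma, which is precisely what lets the single tripled end in $\mathcal D_1$ be reabsorbed.
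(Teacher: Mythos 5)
Your proposal is correct and follows essentially the same route as the paper: the paper likewise builds $\Sigma_1,\Sigma_2$ from the folding map of \Cref{foldingmap} by placing a copy of $\mathscr E(\Sigma)$ in $\text{int}(\mathcal D_0)$ and a single puncture in $\text{int}(\mathcal D_1)$, and your Hilbert-hotel shift lemma is exactly the paper's \Cref{add-finite}, proved there by a re-enumeration onto a convergent sequence of isolated points rather than a shift. The only cosmetic difference is that you move one isolated end $b$ of $\mathscr E(\Sigma)$ into $\mathcal D_1$ whereas the paper adds a fresh point $p_1$ there; both reduce to the same end-space identification.
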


\begin{proof}
Let $\mathscr I(\Sigma)$ be the set of all isolated points of $\mathscr E(\Sigma)$, and let $g$ be the genus of $\Sigma$. Also, let $\mathcal D_0, \mathcal D_1, \mathcal D_{1,1},\mathcal D_{1, 2},\mathcal D_{1, 3}\subseteq \mathbb S^2$, $f\co\mathbb S^2\to \mathbb S^2$, and $\gamma$ be as in the conclusion of \Cref{foldingmapconstruction}. Now, consider a subset $\mathscr E$ of $\text{int}(\mathcal D_0)$ such that $\mathscr E$ is homeomorphic to $\mathscr E(\Sigma)$. Also, consider points $p_1\in \text{int}(\mathcal D_1)$ and $p_{1, i}\in\text{int}(\mathcal D_{1, i})$, $i=1, 2, 3$ such that $f(p_{1,i})=p_1$ for each $i=1,2,3$. See \Cref{flutedpattern}.

Recall that  $f^{-1}(\mathcal D_0)=\mathcal D_0$ and $f\vert_{\mathcal D_0}\co \mathcal{D}_0\to \mathcal D_0$ is the identity map. Thus $f^{-1}(\mathscr E)=\mathscr E$. Now, let $\Sigma_1$ be the surface obtained from $\mathbb S^2\setminus (\mathscr E\cup\{p_1\})$ by attaching $g$ handles along disjoint disks $\Delta_{k}\subset \text{int}(\mathcal D_0)\setminus \mathscr I$, $1\leq k \leq g$, and let $\Sigma_2$ be the surface obtained from $\mathbb S^2\setminus (\mathscr E\cup \{p_{1,1}, p_{1, 2}, p_{1, 3}\})$ by attaching $g$ handles along the same disks $\Delta_{k}$, $1\leq k \leq g$. Then $f$ induces a proper map, which we also call $f$, from $\Sigma_2$ to $\Sigma_1$. By \Cref{degreeonemapchecking}, $\deg(f)=\pm 1$.

\begin{figure}[ht]$$\adjustbox{trim={0.0\width} {0.0\height} {0.07\width} {0.0\height},clip}{\def\svgwidth{1\linewidth}
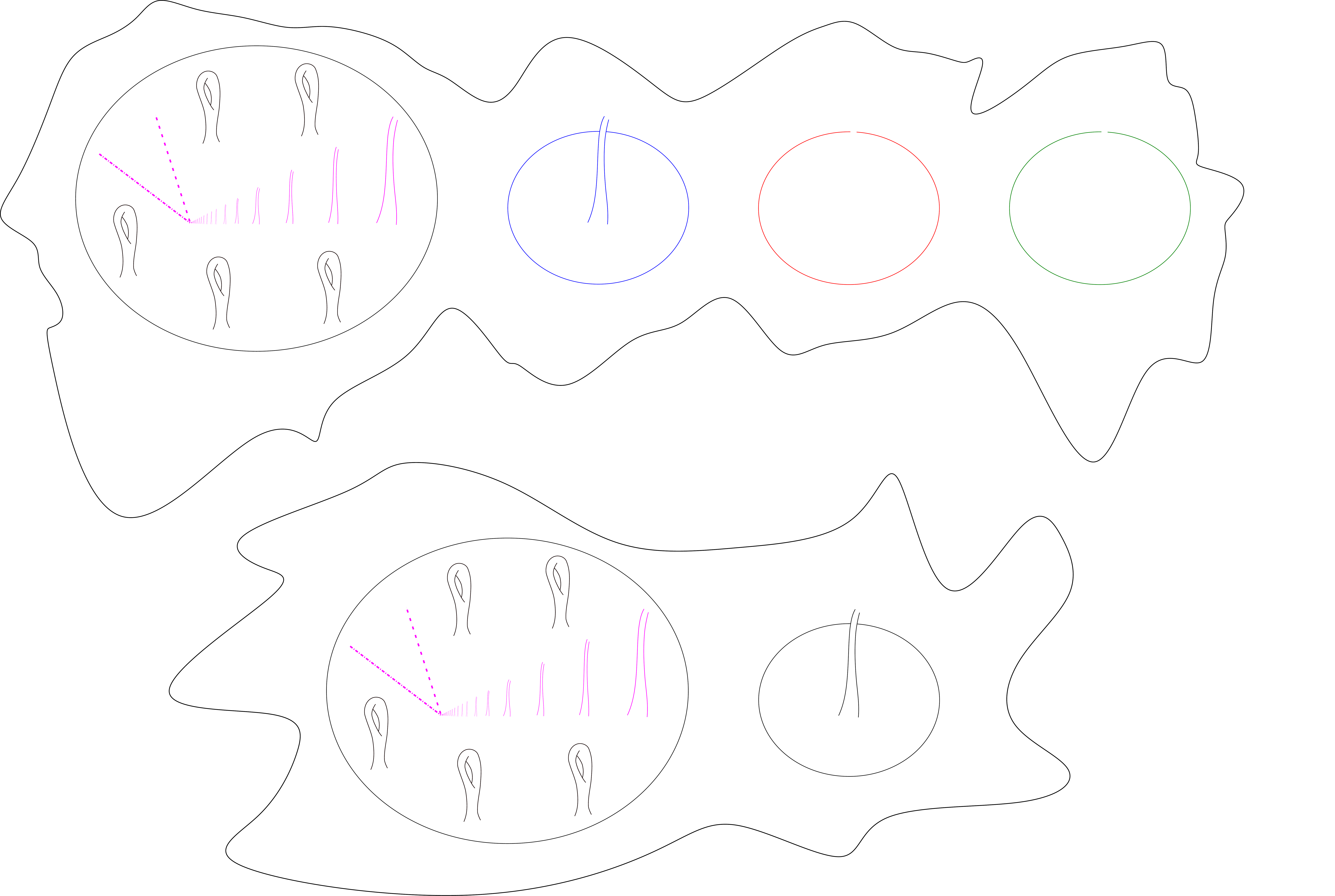
}$$
\caption{A non $\pi_1$-injective degree $\pm 1$ map $f\colon \Sigma\to \Sigma$, where $g=5$ and $\mathscr I$ is an infinite set.}\label{flutedpattern}

\end{figure}

Further, we claim that $f\co\Sigma_2\to\Sigma_1$ is not injective on $\pi_1$. Namely, the fundamental group of $\Sigma_2$ is the amalgamated free product of four groups, one of which is $\pi_1(\mathbb S^2\setminus \textup{int}(\mathcal D_0\cup \mathcal D_{1, 1}\cup\mathcal D_{1,2}\cup\mathcal D_{1,3}))$. 
As $\gamma$ is not homotopic to the trivial loop in $\mathbb S^2\setminus \textup{int}(\mathcal D_0\cup\mathcal D_{1, 1}\cup\mathcal D_{1,2}\cup\mathcal D_{1,3})$, and components of an amalgamated free product inject, $\gamma$ is not homotopic to the trivial loop in $\Sigma_2$. However, $f(\gamma)$ is homotopic to the trivial loop in $\mathbb S^2\setminus \textup{int}(\mathcal D_0\cup\mathcal D_1)$ and hence in $\Sigma_1$. Therefore, $f$ is not injective on $\pi_1$.

Both $\Sigma_1$ and $\Sigma_2$ have genus the same as $\Sigma$ and, by \Cref{add-finite} below, $\mathscr E(\Sigma_1)$ and $\mathscr E(\Sigma_2)$ are homeomorphic to $\mathscr E(\Sigma)$. Further, all ends of $\Sigma$, $\Sigma_1$ and $\Sigma_2$ are planar. Hence, by  \Cref{richard2} both $\Sigma_1$ and $\Sigma_2$ are homeomorphic to $\Sigma$.

Identifying $\Sigma_1$ and $\Sigma_2$ with $\Sigma$ by homeomorphisms, we get a proper map $f\co\Sigma\to\Sigma$ which is not injective on $\pi_1$. As homeomorphisms have degree $\pm 1$, it follows that $\deg(f)=\pm 1$. Replacing $f$ by $f\circ f$ if necessary, we obtain a proper map of degree one that is not injective on $\pi_1$.
\end{proof}

\begin{lemma}
  \textup{Let $\mathscr E$ be a closed totally disconnected subset of $\mathbb S^2$. Let $\mathscr I$ be the set of all isolated points of $\mathscr E$. Assume $\mathscr I$ is infinite. If $\mathscr F$ is a finite subset of $\mathbb S^2\setminus\mathscr E$, then  $\mathscr E\cup \mathscr F$ is homeomorphic to $\mathscr E.$} \label{add-finite}
\end{lemma}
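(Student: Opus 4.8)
The plan is to reduce the statement to a \emph{Hilbert-hotel} shift along a convergent sequence of isolated points. First I would record the structural facts. Since $\mathscr E$ is a closed subset of the compact metrizable space $\mathbb S^2$, it is itself compact and metrizable. Moreover each point of $\mathscr F$ is isolated in $\mathscr E\cup\mathscr F$: as $\mathscr E$ is closed and $\mathscr F$ is finite and disjoint from $\mathscr E$, every $q\in\mathscr F$ has a neighbourhood in $\mathbb S^2$ meeting $\mathscr E\cup\mathscr F$ only in $q$. Hence $\mathscr E$ is clopen in $\mathscr E\cup\mathscr F$, and $\mathscr E\cup\mathscr F$ is the topological disjoint union of $\mathscr E$ with the finite discrete set $\mathscr F$. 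Thus it suffices to produce, writing $Z\coloneqq\mathscr E$ and adjoining $\{q_1,\dots,q_m\}$ as isolated points, a homeomorphism $Z\sqcup\{q_1,\dots,q_m\}\to Z$.

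Next I would extract the sequence along which to shift. Since $\mathscr I$ is infinite and $Z$ is compact metrizable, $\mathscr I$ has an accumulation point $x_0\in Z$; as $x_0$ is a limit of points of $\mathscr I$ it is not itself isolated, and one can choose pairwise distinct isolated points $a_0,a_1,a_2,\dots\in\mathscr I$ with $a_n\to x_0$ and $a_n\neq x_0$ for all $n$. Note that $\overline{\{a_n:n\geq 0\}}=\{a_n:n\geq 0\}\cup\{x_0\}$.

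I would then define $h\co Z\sqcup\{q_1,\dots,q_m\}\to Z$ by $h(q_i)\coloneqq a_{i-1}$ for $1\leq i\leq m$, by $h(a_n)\coloneqq a_{n+m}$ for $n\geq 0$, and by $h(x)\coloneqq x$ for every other point $x$ (in particular $h(x_0)=x_0$). A direct check shows $h$ is a bijection: the adjoined points together with the $a_n$ map bijectively onto $\{a_n:n\geq 0\}$, since $\{a_0,\dots,a_{m-1}\}$ is the image of $\{q_1,\dots,q_m\}$ and $\{a_m,a_{m+1},\dots\}$ the image of $\{a_n:n\geq 0\}$, while every remaining point is fixed.

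The crux is continuity of $h$. At the adjoined points and at each $a_n$ — all isolated in the domain — continuity is automatic. At any non-isolated point $y\neq x_0$ we have $y\notin\{a_n:n\geq 0\}\cup\{x_0\}$, a closed set, so $y$ has a neighbourhood disjoint from $\{a_n\}$ on which $h$ is the identity. The only delicate point is $x_0$, and this is where I expect the main obstacle to lie: given a neighbourhood $U\ni x_0$, I would choose $N$ with $a_n\in U$ for all $n\geq N$ and then a neighbourhood $U'\subseteq U$ of $x_0$ avoiding the finitely many points $a_0,\dots,a_{N-1}$; every point of $U'$ is then either fixed or equals some $a_n$ with $n\geq N$, so $h(U')\subseteq U$. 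This uses compactness (to guarantee the accumulation point $x_0$) and the convergence $a_n\to x_0$ in an essential way. Since $h$ is a continuous bijection from a compact space to a Hausdorff space, it is a homeomorphism, giving $\mathscr E\cup\mathscr F\cong\mathscr E$.
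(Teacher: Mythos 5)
Your proposal is correct and follows essentially the same route as the paper: both extract a sequence of distinct isolated points converging to an accumulation point of $\mathscr I$, absorb the finite set $\mathscr F$ by a Hilbert-hotel shift along that sequence, and conclude via a continuous bijection from a compact space to a Hausdorff space. Your continuity check is somewhat more detailed (point by point rather than by pasting on two closed sets), but the argument is the same.
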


 \begin{proof}
Let $\mathscr A\coloneqq\{a_1,a_2,...\}$ be a subset of $\mathscr I$ such that $a_n\to \ell\in \mathscr E$ ($\mathscr{A}$ exists as $\mathscr E$ is compact and infinite). Define $\mathscr B\coloneqq\mathscr A\cup \mathscr F$. Write $\mathscr B$ as $\mathscr B= \{b_1,b_2,...\}$. Then the map $g\co \mathscr E\cup \mathscr F\to\mathscr E$ defined by $$g(z)\coloneqq\begin{cases}z&\text{ if }z\in (\mathscr E\cup \mathscr F)\setminus \mathscr B,\\ a_n&\text{ if }z=b_n\in \mathscr B,\end{cases}$$ is a homeomorphism. To prove this, note that $g$ is a bijection from a compact space to a Hausdorff space, so it suffices to show that $g$ is continuous. But observe that $g$ restricted to the closed set $(\mathscr E\cup \mathscr F)\setminus \mathscr B$ is the identity, so $g$ is continuous on $(\mathscr E\cup \mathscr F)\setminus \mathscr B$. Also $g$ restricted to the closed set $\mathscr B\cup \{\ell\}$ is continuous as $b_n\to \ell$ and $g(b_n)=a_n\to \ell = g(\ell)$, and all other points of $\mathscr B\cup \{\ell\}$ are isolated. Thus $g$ is continuous, as required.
 \end{proof}

\begin{remark}
\textup{In the paper \cite{MR4353971}, the authors have proved that for every infinite-type surface $\Sigma$, there exists a subsurface homeomorphic to $\Sigma$ such that the inclusion map is not homotopic to a homeomorphism. As our surfaces are connected, this type of inclusion map can’t be proper because of the following two facts:}
\begin{itemize}
    \item \textup{Any injective map between two boundaryless topological manifolds of the same dimension is an open map. This follows from the invariance of domain.}
    \item \textup{Any proper map between two topological manifolds is a closed map, as manifolds are compactly generated spaces, see \cite{MR254818}.}
\end{itemize}\textup{Also, notice that all our results are related to proper maps.}
\end{remark}

\subsection*{Acknowledgements}
The first author is supported by a fellowship from the National Board for Higher Mathematics. We would like to thank Hugo Parlier for telling us about the excellent result of \cite{MR4353971}. We are grateful to the anonymous referee for his careful reading of the paper and his comments and suggestions, which helped considerably in improving the manuscript.

\bibliographystyle{plain}
\bibliography{bibliography.bib}

\begin{thebibliography}{1}

\bibitem{MR214087}
D.~B.~A. Epstein.
\newblock {\href{https://doi.org/10.1007/BF02392203}{\color{blue}Curves on
  {$2$}-manifolds and isotopies}}.
\newblock {\em Acta Math.}, 115:83--107, 1966.

\bibitem{MR192475}
D.~B.~A. Epstein.
\newblock {\href{https://doi.org/10.1112/plms/s3-16.1.369}{\color{blue}The
  degree of a map}}.
\newblock {\em Proc. London Math. Soc. (3)}, 16:369--383, 1966.

\bibitem{MR4353971}
Federica Fanoni, Tyrone Ghaswala, and Alan McLeay.
\newblock {\href{https://doi.org/10.5802/ahl.110}{\color{blue}Homeomorphic
  subsurfaces and the omnipresent arcs}}.
\newblock {\em Ann. H. Lebesgue}, 4:1565--1593, 2021.

\bibitem{MR295352}
John Hempel.
\newblock {\href{https://doi.org/10.2307/2038357}{\color{blue} Residual
  finiteness of surface groups}}.
\newblock {\em Proc. Amer. Math. Soc.}, 32:323, 1972.

\bibitem{MR0488059}
Edwin~E. Moise.
\newblock {\em
  {\href{https://doi.org/10.1007%2F978-1-4612-9906-6}{\color{blue}Geometric
  topology in dimensions {$2$} and {$3$}}}}.
\newblock Graduate Texts in Mathematics, Vol. 47. Springer-Verlag, New
  York-Heidelberg, 1977.

\bibitem{MR254818}
Richard~S. Palais.
\newblock {\href{https://doi.org/10.2307/2037337}{\color{blue}When proper maps
  are closed}}.
\newblock {\em Proc. Amer. Math. Soc.}, 24:835--836, 1970.

\bibitem{MR143186}
Ian Richards.
\newblock {\href{https://doi.org/10.2307/1993768}{\color{blue}On the
  classification of noncompact surfaces}}.
\newblock {\em Trans. Amer. Math. Soc.}, 106:259--269, 1963.

\end{thebibliography}

\newcommand{\Addresses}{{
  \bigskip
  \footnotesize

  \textsc{Department of Mathematics, Indian Institute of Science,
   Bangalore 560012, India}\par\nopagebreak
  \textit{E-mail addresses}: \texttt{\href{mailto:sumantadas@iisc.ac.in}{sumantadas@iisc.ac.in}} and 
  \texttt{\href{mailto:gadgil@iisc.ac.in}{gadgil@iisc.ac.in}}

}}
\Addresses

\end{document}